\colorlet{siaminlinkcolor}{green!50!black}
\colorlet{siamexlinkcolor}{red!50!black}
\pgfplotsset{compat=1.16}
\numberwithin{equation}{section}
\theoremstyle{plain}
\newtheorem{theorem}{Theorem}[section]
\newtheorem{proposition}[theorem]{Proposition}
\newtheorem{lemma}[theorem]{Lemma}
\newtheorem{corollary}[theorem]{Corollary}
\theoremstyle{definition}
\newtheorem{assumption}[theorem]{Assumption}
\newtheorem{remark}[theorem]{Remark}
\Crefname{assumption}{Assumption}{Assumptions}
\Crefname{remark}{Remark}{Remarks}
\Crefname{example}{Example}{Examples}
\Crefname{figure}{Figure}{Figures}
\newcommand{\N}{\mathbb{N}}
\newcommand{\R}{\mathbb{R}}
\newcommand{\Rd}{{\R^d}}
\newcommand{\Normal}{\mathcal{N}}
\newcommand{\Laplace}{\mathcal{L}}
\newcommand{\norm}[1]{{\lVert #1 \rVert}}
\newcommand{\eunorm}[1]{{\lvert #1 \rvert}}
\newcommand{\bigeunorm}[1]{\left\lvert #1 \right\rvert}
\newcommand{\bignorm}[1]{\left\lVert #1 \right\rVert}
\newcommand{\abs}[1]{{\lvert #1 \rvert}}
\newcommand{\bigabs}[1]{\left\lvert #1 \right\rvert}
\newcommand{\scalprod}[1]{{(#1)}}
\newcommand{\bigscalprod}[1]{\left(#1\right)}
\newcommand{\Exp}{\mathbb{E}}
\newcommand{\Prob}{\mathbb{P}}
\newcommand{\dTV}{{d_\text{TV}}}
\newcommand{\dH}{d_\text{H}}
\newcommand{\dBL}{d_\text{BL}}
\newcommand{\Borel}{\mathcal{B}}
\newcommand{\di}{\mathrm{d}}
\newcommand{\Lmuy}{{\Laplace_{\mu^y}}}
\newcommand{\xmap}{\hat{x}}
\newcommand{\T}{{\mathrm{T}}}
\newcommand\derbound{K}
\renewcommand{\epsilon}{\varepsilon}
\DeclareMathOperator{\tr}{tr}
\newcommand{\email}[1]{\href{mailto:#1}{#1}}
\title{\sc non-asymptotic error estimates for the laplace approximation in bayesian inverse problems}
\author{%
	Tapio Helin\thanks{LUT University, School of Engineering Science, P.O.~Box 20, FI-53851 Lappeenranta, Finland (\email{tapio.helin@lut.fi}, \email{remo.kretschmann@mathematik.uni-wuerzburg.de}). The work of the authors was supported by the Academy of Finland (decision 326961). RK is now affiliated with the Institute of Mathematics, University of Würzburg, Emil-Fischer-Str. 30, 97074 Würzburg, Germany.}
	\and 
	Remo Kretschmann\footnotemark[1]
}
\date{27 October 2021}
\begin{document}

\maketitle

\begin{abstract}
In this paper we study properties of the Laplace approximation of the posterior distribution arising in nonlinear Bayesian inverse problems.
Our work is motivated by Schillings et al.~(2020), where it is shown that in such a setting the Laplace approximation error in Hellinger distance converges to zero in the order of the noise level.
Here, we prove novel error estimates for a given noise level that also quantify the effect due to the nonlinearity of the forward mapping and the dimension of the problem. In particular, we are interested in settings in which a linear forward mapping is perturbed by a small nonlinear mapping. Our results indicate that in this case, the Laplace approximation error is of the size of the perturbation. The paper provides insight into Bayesian inference in nonlinear inverse problems, where linearization of the forward mapping has suitable approximation properties.
\end{abstract}

\section{Introduction}

The study of Bayesian inverse problems \cite{kaipio2006statistical, stuart2010inverse} has gained wide attention during the last decade as the increase in computational resources and algorithmic development have enabled uncertainty quantification in numerous new applications in science and engineering. Large-scale problems, where the computational burden of the likelihood is prohibitive, are, however, still a subject of ongoing research.

In this paper we study the Laplace approximation of the posterior distribution arising in nonlinear Bayesian inverse problems. The Laplace approximation is obtained by replacing the log-posterior density with its second order Taylor approximation around the maximum a posteriori (MAP) estimate and renormalizing the density. This produces a Gaussian measure centered at the maximum a posteriori (MAP) estimate with a covariance corresponding to the Hessian of the negative log-posterior density (see, e.g., \cite[Section 4.4]{bishop2006pattern}).

The asymptotic behavior of the parametric Laplace approximation in the small noise or large data limit has been studied extensively in the past (see, e.g., \cite{wong2001asymptotic}).
We note that in terms of approximation properties with respect to taking a posterior expectation over a given function, there is a long line of research which we discuss below. Our work is parallel to this effort in that we aim to estimate the total variation (TV) distance between the two probability measures. 
On the one hand, the error in TV distance bounds the error of the expectation of any function with respect to the Laplace approximation.
On the other hand, it is a measure of the non-Gaussianity of the posterior distribution. Thus, our results describe and quantify how the nonlinearity of the forward mapping translates into non-Gaussianity of the posterior distribution.

Our work is motivated by a recent result by Schillings, Sprungk, and Wacker in \cite{SSW:2020}, where the authors show that in the context of Bayesian inverse problems, the Laplace approximation error in Hellinger distance converges to zero in the order of the noise level.
In practice, one is, however, often interested in estimating the error for a given, \emph{fixed} noise level. It can, e.g., be unclear if the noise level is small enough in order to dominate the error estimate.
Indeed, the nonlinearity of the forward mapping (more generally, the non-Gaussianity of the likelihood) or a large problem dimension can have a signifact contribution to the constant appearing in the asymptotic estimates.
Therefore, it is of interest to quantify such effects in non-asymptotic error estimates for the Laplace approximation. This is the main goal of our work.

\subsection{Our contributions}

The main contribution of this work is threefold:

\begin{enumerate}
	\item In \cref{main_estimate}, we derive our central error estimate for the total variation distance of the Laplace posterior approximation in nonlinear Bayesian inverse problems. The error bound consists of two error terms for which we derive an implicit optimal balancing rule in \cref{prop:optimality}.
We assume uniform bounds on the third differentials of log-likelihood and log-prior density as well as a quadratic lower bound on the log-posterior density to control the error.
Given such bounds the error estimate can be numerically evaluated.
	\item In \cref{nonasymp_est_dim}, we derive a further estimate for the Laplace approximation error that makes the effect of noise level, the bounds specified above and the dimension of the problem explicit. This error estimate readily implies linear rates of convergence for fixed problem dimension both in the small noise limit and when the third differential of the log-likelihood goes to zero, see \cref{conv_rate_fixed_dim}. It furthermore leads to a convergence rate for increasing problem dimension in terms of noise level, problem dimension, and aforementioned bounds aligned with \cite{Lu:2017}, see \cref{cor_inc_prob_dim}.
	\item In \cref{conv_rate_nonlin_perturb}, we quantify the error of the Laplace approximation in terms of the nonlinearity of the forward mapping for linear inverse problems with small nonlinear perturbation and Gaussian prior distribution. We assume uniform bounds on the differentials of the nonlinear perturbation of up to third order to control the error. This error estimate immediately implies linear convergence in terms of the size of the perturbation. Moreover, such a result provides insight into Bayesian inference in nonlinear inverse problems, where linearization of the forward mapping has suitable approximation properties.
\end{enumerate}

\subsection{Relevant literature}

The asymptotic approximation of general integrals of the form $\int e^{\lambda f(x)}g(x) \di x$ by Laplace's method is presented in \cite{olver1974asymptotics,wong2001asymptotic}.
Non-asymptotic error bounds for the Laplace approximation of such integrals have been stated in the univariate \cite{Olv:1968} and multivariate case \cite{MW:1983,IM:2014}. 
The Laplace approximation error and its convergence in the limit $\lambda \to \infty$ have been estimated in the multivariate case when the function $f$ depends on $\lambda$ or the maximizer of $f$ is on the boundary of the integration domain \cite{lapinski2019multivariate}.
A representation of the coefficients appearing in the asymptotic expansion of the approximated integral utilizing ordinary potential polynomials is given in \cite{nemes2013explicit}.

The error estimates on the Laplace approximation in TV distance are closely connected to the so-called Bernstein--von Mises (BvM) phenomenon that quantifies the convergence of the scaled posterior distribution toward a Gaussian distribution in the large data or small noise limit.
Parametric BvM theory is well-understood \cite{van2000asymptotic, le2012asymptotic}. Our work is inspired by a BvM result by Lu in \cite{Lu:2017}, where a parametric BvM theorem for nonlinear Bayesian inverse problems with an increasing number of parameters is proved. Similar to our objectives, he quantifies the asymptotic convergence rate in terms of noise level, nonlinearity of the forward mapping and dimension of the problem. 
However, our emphasis differs from \cite{Lu:2017} (and other BvM results) in that we are not restricted to considering the vanishing noise limit, but are more interested in quantifying the effect of small nonlinearity or dimension at a fixed noise level.
We also point out that BvM theory has been developed for non-parametric Bayesian inverse problems (see, e.g., \cite{nickl2020, monard2019efficient, giordano2020bernstein}), where the convergence is quantified in a distance that metrizes the weak convergence.

Let us conclude by briefly emphasizing that the Laplace approximation is widely utilized for different purposes in computational Bayesian statistics including, i.a., the celebrated INLA algorithm \cite{rue2009approximate}.
It has also recently gained popularity in optimal Bayesian experimental design (see, e.g., \cite{ryan2016review, long2013fast, alexanderian2016fast}). Moreover, it provides a convenient reference measure for numerical quadrature \cite{chen2017hessian, schillings2016scaling} or importance sampling \cite{beck2018fast}.

\subsection{Organization of the paper}

Before we present the aforementioned three main results in \cref{sec:estimate,sec:param_free_est,sec:small_nonlin_limit}, we introduce our set-up and notation, Laplace's method, and the total variation metric in \cref{sec:prelim_setup}.
In \cref{sec:estimate}, we introduce our central error bound for the Laplace approximation and explain the idea behind its proof.
In \cref{sec:param_free_est}, we derive an explicit error estimate for the Laplace approximation and describe its asymptotic behavior.
In \cref{sec:small_nonlin_limit}, we prove the error estimate for inverse problems with small nonlinearity in the forward mapping and Gaussian prior distribution.


\section{Preliminaries and set-up}
\label{sec:prelim_setup}

We consider for $\varepsilon > 0$ the inverse problem of recovering $x \in \R^d$ from a noisy measurement $y \in \R^d$, where
\[ y = G(x) + \sqrt{\varepsilon}\eta, \]
$\eta \in \R^d$ is random noise with standard normal distribution $\Normal(0,I_d)$, and $G$: $\Rd \to \Rd$ is a possibly nonlinear mapping. 
In the following, $\eunorm{\cdot}$ denotes the Euclidean norm on $\R^d$.
If we assume a prior distribution $\mu$ on $\R^d$ with Lebesgue density $\exp(-R(x))$, then Bayes' formula yields a posterior distribution $\mu^y$ with density
\begin{equation}
	\label{post_dens_eps}
		\mu^y(\di x) \propto \exp \left( -\frac{1}{2\varepsilon} \eunorm{y - G(x)}^2 \right) \mu(\di x) 
		= \exp \left( -\frac{1}{2\varepsilon} \eunorm{y - G(x)}^2 - R(x) \right) \di x.
\end{equation}
For all $x, y \in \R^d$, we denote the \emph{scaled} negative log-likelihood by
\begin{equation*}
	\Phi(x) = \frac12\eunorm{y - G(x)}^2.
\end{equation*}
If 
\[
	x \mapsto \Phi(x) + \varepsilon R(x)
\]
has a unique minimizer in $\R^d$, we call this minimizer the \emph{maximum a posteriori (MAP) estimate} and denote it by $\xmap = \xmap(y)$.
Furthermore, we set
\begin{equation*}
	I(x) := \Phi(x) + \varepsilon R(x) - \Phi(\xmap) - \varepsilon R(\xmap)
\end{equation*}
for all $x \in \R^d$.
This way, $I$ is nonnegative, the MAP estimate $\xmap$ minimizes $I$ and satisfies $I(\xmap) = 0$.
Moreover, we can express the posterior density as
\begin{equation}
	\label{post_dens_I}
	\mu^y(\di x) = \frac{1}{Z} \exp \left( -\frac{1}{\varepsilon}I(x) \right) \di x
\end{equation}
with a normalization constant $Z$.

Laplace's method approximates the posterior distribution by a Gaussian distribution $\Lmuy$ whose mean and covariance are chosen in such a way that its log-density agrees, up to a constant, with the second order Taylor polynomial around $\xmap$ of the log-posterior density.
If $I \in C^2(\Rd,\R)$, the Laplace approximation of $\mu^y$ is defined as 
\begin{equation*}
	\Laplace_{\mu^y} := \Normal(\hat{x}, \varepsilon \Sigma),
\end{equation*}
where $\Sigma := (D^2I(\hat{x}))^{-1}$. Here, $DI$ denotes the differential of $I$, and we identify $D^2I(\xmap)$ with the Hessian matrix $\{D^2I(\xmap)(e_j,e_k)\}_{j,k = 1}^d$.
The Lebesgue density of $\Lmuy$ is given by
\begin{align*}
	\Laplace_{\mu^y}(\di x) 
	&= \frac{1}{\widetilde{Z}} \exp\left(-\frac{1}{2\varepsilon}\norm{x - \hat{x}}_\Sigma^2\right) \di x \\
	&= \frac{1}{\widetilde{Z}} \exp\left(-\frac{1}{2\varepsilon}D^2 I(\hat{x})(x - \hat{x},x - \hat{x})\right) \di x,
\end{align*}
where
\begin{equation}
	\label{form_Z_tilde}
	\widetilde{Z} = \int_{\R^d} \exp\left(-\frac{1}{2\varepsilon}\norm{x - \hat{x}}_\Sigma^2\right) \di x 
	= \varepsilon^{\frac{d}{2}} (2\pi)^\frac{d}{2} \sqrt{\det \Sigma}.
\end{equation}
Since $I(\xmap) = 0$ and $DI(\xmap) = 0$, $\frac{1}{2\epsilon}\norm{x - \xmap}_\Sigma^2$ is precisely the truncated Taylor series of $I/\varepsilon$ around $\hat{x}$.

The \emph{total variation (TV) distance} between two probability measures $\nu$ and $\mu$ on $(\Rd,\Borel(\Rd))$ is defined as
\begin{equation*}
	\label{def_TV_dist}
	\dTV(\nu,\mu) = \sup_{A \in \Borel(\R^d)} \abs{\nu(A) - \mu(A)},
\end{equation*}
see Section 2.4 in \cite{sullivan2015}.
It has the alternative representation
\begin{equation*}
	\label{equi_def_TV_dist}
	\dTV(\nu,\mu) = \frac12 \sup_{\norm{f}_\infty \le 1} \left\{\int_\Rd f \di\nu - \int_\Rd f \di\mu\right\} = \frac12 \int_{\R^d} \left| \frac{d\nu}{d\rho} - \frac{d\mu}{d\rho} \right| d\rho
\end{equation*}
where $\norm{f}_\infty := \sup_{x \in \Rd} \abs{f(x)}$ and $\rho$ can be any probability measure dominating both $\mu$ and $\nu$, see Remark 5.9 in \cite{sullivan2015} and equation (1.12) in \cite{law2015}.
The total variation distance is valuable for the purpose of uncertainty quantification because it bounds the error of any credible region when using a measure $\nu$ instead of another measure $\mu$.
It can, moreover, be used to bound the difference in expectation of any bounded function $f$ on $\Rd$ with respect to $\mu$ and $\nu$, respectively, by
\[
	\bigabs{\Exp^\nu[f] - \Exp^\mu[f]} \le 2\norm{f}_\infty\dTV(\nu,\mu),
\]
see Lemma 1.32 in \cite{law2015}.
By Kraft's inequality
\[
	\dH(\mu,\nu)^2 \le \dTV(\mu,\nu) \le \sqrt{2}\dH(\mu,\nu),
\]
the total variation distance bounds the square of the \emph{Hellinger distance}
\[
	\dH(\mu,\nu) = \left( \frac12 \int_{\R^d} \left| \sqrt{\frac{d\nu}{d\rho}} - \sqrt{\frac{d\mu}{d\rho}} \right|^2 d\rho \right)^\frac12,
\]
see Definition 1.28 and Lemma 1.29 in \cite{law2015} or \cite{kraft1955}, while both metrics induce the same topology.
The \emph{bounded Lipschitz metric}
\[
	\dBL(\mu,\nu) = \frac12\sup_{\norm{f}_\infty + \norm{f}_\text{Lip} \le 1} \left\{\int_\Rd f \di\mu - \int_\Rd f \di\nu\right\},
\]
which induces the topology of weak convergence of probability measures, is trivially bounded by the total variation distance. Here, we denote
\[
	\norm{f}_\text{Lip} := \sup_{x,y \in \Rd,\,x \neq y} \frac{\abs{f(x) - f(y)}}{\eunorm{x - y}}.
\]
For further information on the relation between the total variation distance and other probability metrics we refer the survey paper \cite{gibbs2002}.


\section{Central error estimate}
\label{sec:estimate}

We will use the following ideas to bound the error of the Laplace approximation $\Laplace_{\mu^y}$ for a given realization of the data $y \in \R^d$.
First, we will prove the fundamental estimate
\begin{equation}
	\label{dTV_fund_int}
	\dTV(\mu^y,\Lmuy) 
	\le \frac{1}{\widetilde{Z}} \int_{\R^d} \bigabs{\exp\left(-\frac{1}{\epsilon}I(x)\right) - \exp\left(-\frac{1}{2\epsilon}\norm{x - \xmap}_\Sigma^2\right)} \di x.
\end{equation}
If we have a radial upper bound $f(\norm{x - \xmap}_\Sigma)$ for the integrand on the right hand side of \eqref{dTV_fund_int}, we can estimate
\[
	\dTV(\mu^y,\Lmuy) \le \int_\Rd f(\norm{x - \xmap}_\Sigma) \di x = \sqrt{\det \Sigma} \int_\Rd f(\eunorm{u}) \di u,
\]
where we applied a change of variable to a local parameter $u := \Sigma^{-\frac12}(x - \xmap)$.
This integral, we can now express as a $1$-dimensional integral using polar coordinates.

The integrand on the right hand side of \eqref{dTV_fund_int} is very small and flat around $\xmap$, since $\frac12\norm{x - \xmap}_\Sigma^2$ is the second order Taylor expansion of $I(x)$ around $\xmap$, and it falls off as $\abs{x} \to \infty$ because it is integrable. Its mass is thus concentrated in an intermediate distance from $\xmap$. This can be seen, e.g., in \cref{arctan_densities}.
We exploit this structure by splitting up the integral in \eqref{dTV_fund_int} and bounding the integrand on a $\Sigma$-norm ball
\[
	U(r_0) := \{ x \in \Rd: \norm{x - \xmap}_\Sigma \le r_0 \}
\]
around the MAP estimate $\xmap$ and on the remaining space $\R^d \setminus U(r_0)$ separately.
On $U(r_0)$, we then control the integrand by imposing uniform bounds on the third order differentials of the log-likelihood and the log-prior density.
Outside of $U(r_0)$, we control it by imposing a quadratic lower bound on $I$.

We make the following assumptions on $\Phi$, $R$, $I$, $\xmap$, and $\Sigma$, which will be further discussed in \cref{rem:assumptions}.
\begin{assumption}
	\label{unique_minimizer}
	We have $\Phi, R \in C^3(\Rd,\R)$, $I$ has a unique global minimizer $\hat{x} = \hat{x}(y) \in \Rd$ and $D^2I(\hat{x})$ is positive definite.
\end{assumption}

\begin{assumption}
	\label{bound_third_diff}
	There exists a constant $\derbound>0$ such that
	\begin{equation*}
		\max \left\{\norm{D^3 \Phi(x)}_\Sigma, \norm{D^3 R(x)}_\Sigma\right\} \leq \derbound
	\end{equation*}
	for all $x \in \R^d$, where
	\begin{equation*}
		\norm{D^3\Phi(x)}_\Sigma := \sup \Big\{ \big\lvert D^3\Phi(x)(h_1,h_2,h_3)\big\rvert: \norm{h_1}_\Sigma, \norm{h_2}_\Sigma, \norm{h_3}_\Sigma \le 1 \Big\}.
	\end{equation*}
\end{assumption}

\begin{assumption}
	\label{quadratic_bound_I}
	There exists $0 < \delta \leq 1$ such that
	\begin{equation*}
		I(x) \ge \frac{\delta}{2}\norm{x - \xmap}_\Sigma^2 \quad \text{for all }x \in \R^d.
	\end{equation*}
\end{assumption}
Let $\Gamma(z)$ denote the classical gamma function and $\gamma(a,z)$ the lower incomplete gamma function. Then,
\[
	\Xi_d(t) := \frac{\gamma\left(\frac{d}{2}, \frac{t}{2}\right)}{\Gamma\left(\frac{d}{2}\right)} \quad \text{for all }t \ge 0, d > 0,
\]
describes the probability of a Euclidean ball in $\R^d$ with radius $\sqrt{t}$ around $0$ under a standard Gaussian measure (see \cref{tail_prob}).

The main result of this section is the following error estimate.
\begin{theorem}
	\label{main_estimate}
	Suppose that \cref{unique_minimizer,bound_third_diff,quadratic_bound_I} hold.
	Then we have
	\begin{equation}
		\label{overall_est}
		\dTV(\mu^y,\Lmuy) \le E_1(r_0) + E_2(r_0)
	\end{equation}
	for all $r_0 \ge 0$, where
	\begin{align}
		\label{eq:defM1}
		E_1(r_0) &:= c_d \epsilon^{-\frac d2} \int_0^{r_0} f(r) r^{d-1} \di r, \\
		\label{eq:defM2}
		E_2(r_0) &:= \delta^{-\frac{d}{2}}\left(1 - \Xi_d\bigg(\frac{\delta r_0^2}{\epsilon}\bigg)\right)
	\end{align}
	for all $r_0 \ge 0$,
	\begin{equation}
		\label{eq:deff}
		f(r) := \left[\exp\left(\frac{(1 + \epsilon)\derbound}{6\epsilon}r^3\right) - 1\right] \exp\left(-\frac{1}{2\epsilon}r^2\right)
	\end{equation}
	for all $r \ge 0$, and
	\begin{equation*}
		c_d := \frac{2^{1-\frac d2}}{\Gamma\left(\frac d2\right)}.
	\end{equation*}
\end{theorem}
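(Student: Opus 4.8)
The plan is to carry out the three-step program sketched before the statement: reduce the total variation distance to a single Lebesgue integral of the difference of the two \emph{unnormalised} densities, bound this integrand by a radial function on the ball $U(r_0)$, and control it by Gaussian tail decay on the complement. Throughout I write $p(x) := \exp\!\big(-I(x)/\epsilon\big)$ and $q(x) := \exp\!\big(-\tfrac{1}{2\epsilon}\norm{x-\xmap}_\Sigma^2\big)$, so that $\mu^y = p/Z$ and $\Lmuy = q/\widetilde{Z}$ with $Z = \int_\Rd p$ and $\widetilde{Z} = \int_\Rd q$ as in \eqref{form_Z_tilde}.

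\textbf{Fundamental estimate.} Starting from $\dTV(\mu^y,\Lmuy) = \tfrac12\int_\Rd \abs{p/Z - q/\widetilde{Z}}\,\di x$, I would use the algebraic identity
\[
	\frac{p}{Z} - \frac{q}{\widetilde{Z}} = \frac{p-q}{\widetilde{Z}} + p\left(\frac{1}{Z} - \frac{1}{\widetilde{Z}}\right),
\]
together with $\int_\Rd p = Z$ and $\abs{\widetilde{Z} - Z} = \bigabs{\int_\Rd (q-p)} \le \int_\Rd \abs{p-q}$. Bounding each of the two resulting terms by $\tfrac{1}{2\widetilde{Z}}\int_\Rd\abs{p-q}$ yields exactly \eqref{dTV_fund_int}.

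\textbf{The ball $U(r_0)$.} I would then split the integral in \eqref{dTV_fund_int} over $U(r_0)$ and its complement. On $U(r_0)$, since $\tfrac12\norm{x-\xmap}_\Sigma^2$ is the second order Taylor polynomial of $I$ about $\xmap$ (using $I(\xmap)=0$, $DI(\xmap)=0$, and $D^2I(\xmap)=\Sigma^{-1}$), Taylor's theorem with Lagrange remainder gives $I(x) - \tfrac12\norm{x-\xmap}_\Sigma^2 = \tfrac16 D^3I(\xi)(x-\xmap,x-\xmap,x-\xmap)$ for some $\xi$ on the segment from $\xmap$ to $x$. Writing $D^3I = D^3\Phi + \epsilon D^3R$ and invoking \cref{bound_third_diff}, this remainder is bounded in absolute value by $\tfrac{(1+\epsilon)\derbound}{6}\norm{x-\xmap}_\Sigma^3$. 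Factoring out $q$ in $\abs{p-q} = q\,\bigabs{\exp(-(I-\tfrac12\norm{\cdot}_\Sigma^2)/\epsilon) - 1}$ and applying the elementary two-sided inequality $\abs{e^{-s}-1} \le e^{\abs{s}}-1$ produces precisely the pointwise radial bound $\abs{p(x)-q(x)} \le f(\norm{x-\xmap}_\Sigma)$ with $f$ as in \eqref{eq:deff}. The change of variables $u := \Sigma^{-1/2}(x-\xmap)$ (so $\di x = \sqrt{\det\Sigma}\,\di u$ and $\norm{x-\xmap}_\Sigma = \eunorm{u}$) followed by polar coordinates then turns $\tfrac{1}{\widetilde{Z}}\int_{U(r_0)} f\,\di x$ into $E_1(r_0)$; the constant $c_d$ arises as the product of $\sqrt{\det\Sigma}/\widetilde{Z} = \epsilon^{-d/2}(2\pi)^{-d/2}$ with the surface area $2\pi^{d/2}/\Gamma(d/2)$ of the unit sphere.

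\textbf{The complement and the main obstacle.} Outside $U(r_0)$ I would avoid Taylor expansion entirely and instead use $\abs{p-q} \le \max\{p,q\}$. \cref{quadratic_bound_I} gives $p(x) \le \exp(-\tfrac{\delta}{2\epsilon}\norm{x-\xmap}_\Sigma^2)$, while $\delta \le 1$ gives the same upper bound for $q$; hence $\abs{p-q} \le \exp(-\tfrac{\delta}{2\epsilon}\norm{x-\xmap}_\Sigma^2)$ there. The same change of variables identifies $\tfrac{1}{\widetilde{Z}}\int_{\Rd\setminus U(r_0)}\exp(-\tfrac{\delta}{2\epsilon}\norm{x-\xmap}_\Sigma^2)\,\di x$ with the scaled Gaussian tail $\delta^{-d/2}\,\Prob(\eunorm{Z}^2 > \delta r_0^2/\epsilon)$ for $Z \sim \Normal(0,I_d)$, which equals $E_2(r_0)$ by the definition of $\Xi_d$. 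Summing the two contributions gives \eqref{overall_est}, valid for every $r_0 \ge 0$. I expect the genuinely analytic content to sit entirely in the pointwise estimate on $U(r_0)$, where the two-sided control of the third-order remainder has to be converted into control of $\abs{e^{-s}-1}$ without losing the sign-independent bound; the remaining difficulty is purely bookkeeping, namely correctly propagating the factor $(1+\epsilon)$ through $D^3I = D^3\Phi + \epsilon D^3R$ and verifying that $\widetilde{Z}$ cancels the Jacobian $\sqrt{\det\Sigma}$ cleanly enough for the constants to land exactly on $c_d$ and the $\delta^{-d/2}$ prefactor.
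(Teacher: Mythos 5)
Your proposal is correct and follows essentially the same route as the paper: the same fundamental estimate reducing $\dTV$ to an integral of the difference of unnormalized densities, the same third-derivative Taylor bound converted to a radial estimate via polar coordinates on $U(r_0)$, and the same quadratic-lower-bound Gaussian tail argument on the complement. The only cosmetic differences (applying Taylor's theorem to $I$ directly rather than to $\Phi$ and $R$ separately, and packaging the far-range sign case analysis as $\abs{p-q}\le\max\{p,q\}$) are mathematically equivalent to the paper's steps.
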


\begin{remark}
	\label{asymp_E12}
	The two functions $E_1$ and $E_2$ are continuous and monotonic with the following asymptotic behavior. The first error term $E_1(r_0)$ obeys
	\[ E_1(0) = 0, \quad \lim_{r_0 \to \infty} E_1(r_0) = \infty, \]
	whereas the second error term $E_2(r_0)$ satisfies
	\[ E_2(0) = 2\delta^{-\frac{d}{2}}, \quad \lim_{r_0 \to \infty} E_2(r_0) = 0. \]
	This can be seen as follows.

	The function $f(r)r^{d-1}$ is bounded on the interval $[0,1]$, so that the integral $\int_0^{r_0} f(r)r^{d-1} \di r$ converges to $0$ as $r_0 \to 0$, and hence also $E_1(r_0)$. On the other hand, $f(r)r^{d-1}$ converges to $\infty$ as $r \to \infty$, so that the integral $\int_0^{r_0} f(r)r^{d-1} \di r$ and $E_1(r_0)$ converge to $\infty$ as $r \to \infty$. 
	Since $f(r)r^{d-1}$ is positive for all $r \ge 0$, $E_1$ moreover increases monotonically.
	By definition of the lower incomplete gamma function, $\Xi_d$ increases monotonically and $\Xi_d(t) \in [0,1]$ for all $t \ge 0$ and $d > 0$. Moreover, $\Xi_d(t) \to 0$ as $t \to 0$ and $\Xi_d(t) \to 1$ as $t \to \infty$.
	Consequently, $E_2(r_0)$ converges toward $2\delta^{-d/2}$ as $r_0 \to 0$, and toward $0$ as $r_0 \to \infty$.	
	The asymptotic behavior of $E_2$ is described more precisely in \cref{exp_tail_est}.	
\end{remark}

The following three propositions formalize the ideas described in the beginning of this section and constitute the prove of \cref{main_estimate}. 
\begin{proposition}[Fundamental estimate]
	\label{dTV_fund_est}
	The Laplace approximation $\Lmuy$ of $\mu^y$ satisfies
	\begin{equation*}
		\dTV(\mu^y,\Lmuy) \le \int_{\R^d} \bigabs{\exp\left(-\frac{1}{\varepsilon}R_2(x)\right) - 1} \Lmuy(\di x),
	\end{equation*}
	where $R_2(x) := I(x) - \frac12\norm{x - \xmap}_\Sigma^2$ for all $x \in \R^d$.
\end{proposition}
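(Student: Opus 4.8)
The plan is to express both measures through their unnormalized Lebesgue densities and reduce the claim to the single integral estimate \eqref{dTV_fund_int}, which I then obtain via a triangle-inequality split that absorbs the mismatch between the two normalization constants. Writing $g(x) := \exp(-\tfrac1\varepsilon I(x))$ and $h(x) := \exp(-\tfrac1{2\varepsilon}\norm{x-\xmap}_\Sigma^2)$, the posterior density from \eqref{post_dens_I} is $\tfrac1Z g$ with $Z = \int_{\Rd} g$, and the Laplace density is $\tfrac1{\widetilde Z} h$ with $\widetilde Z = \int_{\Rd} h$ as in \eqref{form_Z_tilde}. Using the integral form of the total variation distance with the Lebesgue measure as dominating measure $\rho$, I would start from
\[
	\dTV(\mu^y,\Lmuy) = \frac12 \int_{\Rd} \bigabs{\tfrac1Z g(x) - \tfrac1{\widetilde Z} h(x)} \di x.
\]

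Next, I would decompose the integrand by inserting the cross term $\tfrac1{\widetilde Z} g$, writing $\tfrac1Z g - \tfrac1{\widetilde Z} h = (\tfrac1Z - \tfrac1{\widetilde Z}) g + \tfrac1{\widetilde Z}(g - h)$, and apply the triangle inequality. The second term contributes $\tfrac1{\widetilde Z}\int|g-h|$ directly. For the first term, since $\int g = Z$, its contribution equals $\tfrac12\bigabs{1 - Z/\widetilde Z} = \tfrac1{2\widetilde Z}\bigabs{\widetilde Z - Z}$; because $\widetilde Z - Z = \int(h-g)$, this is at most $\tfrac1{2\widetilde Z}\int|g-h|$. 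Adding the two contributions yields exactly \eqref{dTV_fund_int}, namely $\dTV(\mu^y,\Lmuy) \le \tfrac1{\widetilde Z}\int_{\Rd}|g-h|\di x$.

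Finally, I would recast the integral into the stated form. By the definition $R_2(x) = I(x) - \tfrac12\norm{x-\xmap}_\Sigma^2$ one has $g(x) = \exp(-\tfrac1\varepsilon R_2(x))\,h(x)$, hence $|g-h| = \bigabs{\exp(-\tfrac1\varepsilon R_2) - 1}\,h$; dividing by $\widetilde Z$ turns $\tfrac1{\widetilde Z}h\,\di x$ into $\Lmuy(\di x)$, which produces precisely the claimed inequality.

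I expect the only genuine obstacle to be the treatment of the normalization mismatch $Z \neq \widetilde Z$: the choice of which reciprocal to retain as the common factor, together with the bound $\bigabs{\widetilde Z - Z} \le \int|g-h|$, is what forces the constant to come out as $1/\widetilde Z$ rather than something larger, while the remainder is routine algebra. I would also record that $g$ and $h$ are both integrable — $h$ because $\Sigma$ is positive definite by \cref{unique_minimizer}, and $g$ because $Z < \infty$ is needed for the posterior to be well defined and is in any case guaranteed by the quadratic lower bound of \cref{quadratic_bound_I} — so that every integral above is finite and the manipulations are justified.
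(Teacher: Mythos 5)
Your proof is correct and follows essentially the same route as the paper: the same decomposition isolating the normalization mismatch $(\tfrac1Z - \tfrac1{\widetilde Z})g + \tfrac1{\widetilde Z}(g-h)$ (the paper just factors out $\tfrac1{\widetilde Z}$ first), the same bound $\bigabs{\widetilde Z - Z} \le \int\abs{g-h}$, and the same rewriting of $\tfrac1{\widetilde Z}\int\abs{g-h}\,\di x$ as an integral against $\Lmuy$. The only difference is cosmetic ordering of the algebra, plus your added remark on integrability, which the paper leaves implicit.
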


\begin{proof}
For a fixed $\varepsilon > 0$ we can estimate
\begin{align*}
	2\dTV(\mu^y,\Lmuy) &= \int_{\R^d} \left| \frac{1}{Z}\exp\left(-\frac{1}{\varepsilon}I(x)\right) - \frac{1}{\widetilde{Z}}\exp\left(-\frac{1}{2\varepsilon}\norm{x - \xmap}_\Sigma^2\right) \right| \di x \\
	&= \frac{1}{\widetilde{Z}} \int_{\R^d} \left| \frac{\widetilde{Z}}{Z}\exp\left(-\frac{1}{\varepsilon}I(x)\right) - \exp\left(-\frac{1}{2\varepsilon}\norm{x - \xmap}_\Sigma^2\right) \right| \di x \\
	&\le \frac{1}{\widetilde{Z}} \left| \frac{\widetilde{Z}}{Z} - 1 \right| \int_{\R^d} \exp\left(-\frac{1}{\varepsilon}I(x)\right) \di x \\
	&\quad+ \frac{1}{\widetilde{Z}} \int_{\R^d} \left| \exp\left(-\frac{1}{\varepsilon}I(x)\right) - \exp\left(-\frac{1}{2\varepsilon}\norm{x - \xmap}_\Sigma^2\right) \right| \di x \\
	&= \frac{1}{\widetilde{Z}} \left| \widetilde{Z} - Z \right|
	+ \int_{\R^d} \left| \exp\left(-\frac{1}{\varepsilon}I(x) + \frac{1}{2\varepsilon}\norm{x - \xmap}_\Sigma^2\right) - 1 \right| \Lmuy(\di x). \\
	&= \frac{1}{\widetilde{Z}} \left| \widetilde{Z} - Z \right|
	+ \int_{\R^d} \left| \exp\left(-\frac{1}{\varepsilon}R_2(x)\right) - 1 \right| \Lmuy(\di x).
\end{align*}
Now, the estimate
\begin{align*}
\left|Z - \widetilde{Z}\right|
&\le \int_{\R^d} \left| \exp\left(-\frac{1}{\varepsilon}I(x)\right) - \exp\left(-\frac{1}{2\varepsilon}\norm{x - \xmap}_\Sigma^2\right) \right| \di x \\
&= \widetilde{Z} \int_{\R^d} \left|\exp\left(-\frac{1}{\varepsilon}R_2(x)\right) - 1\right| \Lmuy(\di x)
\end{align*}
yields the proposition.
\end{proof}

\begin{proposition}[Close range estimate]
	\label{close_range_est}
	Suppose that \cref{bound_third_diff} holds.
	Then it follows that
	\begin{equation*}
		\int_{U(r_0)} \bigabs{\exp\left(-\frac{1}\epsilon R_2(x)\right) - 1} \Lmuy(\di x) \le c_d \epsilon^{-\frac d2} \int_0^{r_0} f(r) r^{d-1} \di r
	\end{equation*}
	for all $r_0 \ge 0$, where $f$ and $c_d$ are defined as in \cref{main_estimate}.
\end{proposition}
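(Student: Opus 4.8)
The plan is to reduce the left-hand side to a one-dimensional radial integral by first producing a pointwise radial bound on the integrand and then exploiting the Gaussian structure of $\Lmuy$. First I would observe that $R_2$ is exactly the third-order Taylor remainder of $I$ at $\xmap$: since $I(\xmap) = 0$, $DI(\xmap) = 0$, and $D^2I(\xmap) = \Sigma^{-1}$, the quadratic term $\tfrac12\norm{x - \xmap}_\Sigma^2$ coincides with $\tfrac12 D^2 I(\xmap)(x - \xmap, x - \xmap)$, so that $R_2(x) = I(x) - I(\xmap) - DI(\xmap)(x - \xmap) - \tfrac12 D^2 I(\xmap)(x - \xmap, x - \xmap)$ is precisely the remainder past the second-order expansion.

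Next I would apply Taylor's theorem with the Lagrange form of the remainder to write $R_2(x) = \tfrac16 D^3 I(\xi)(x - \xmap, x - \xmap, x - \xmap)$ for some $\xi$ on the segment joining $\xmap$ and $x$. Writing $I = \Phi + \epsilon R$ up to an additive constant gives $D^3 I = D^3 \Phi + \epsilon D^3 R$, so the triangle inequality together with \cref{bound_third_diff} yields $\norm{D^3 I(\xi)}_\Sigma \le (1 + \epsilon)\derbound$. By the definition of the $\Sigma$-operator norm, $\bigabs{D^3 I(\xi)(x - \xmap, x - \xmap, x - \xmap)} \le \norm{D^3 I(\xi)}_\Sigma \norm{x - \xmap}_\Sigma^3$, whence $\bigabs{R_2(x)} \le \tfrac{(1 + \epsilon)\derbound}{6}\norm{x - \xmap}_\Sigma^3$, a bound depending on $x$ only through $r := \norm{x - \xmap}_\Sigma$.

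Then I would pass to the integrand using the elementary inequality $\abs{e^{-t} - 1} \le e^{\abs{t}} - 1$ with $t = R_2(x)/\epsilon$, which converts the cubic bound into $\bigabs{\exp(-R_2(x)/\epsilon) - 1} \le \exp\!\big(\tfrac{(1 + \epsilon)\derbound}{6\epsilon} r^3\big) - 1$. Multiplying by the Laplace density $\widetilde{Z}^{-1}\exp(-\tfrac{1}{2\epsilon} r^2)$ produces exactly $\widetilde{Z}^{-1} f(r)$ with $f$ as in \cref{main_estimate}, so the integrand is dominated by the radial function $\widetilde{Z}^{-1} f(\norm{x - \xmap}_\Sigma)$ on all of $U(r_0)$.

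Finally I would evaluate $\widetilde{Z}^{-1}\int_{U(r_0)} f(\norm{x - \xmap}_\Sigma)\,\di x$ by the change of variables $u = \Sigma^{-1/2}(x - \xmap)$, under which $\norm{x - \xmap}_\Sigma = \eunorm{u}$, $\di x = \sqrt{\det \Sigma}\,\di u$, and $U(r_0)$ becomes the Euclidean ball of radius $r_0$. Switching to polar coordinates turns this into $\frac{\sqrt{\det \Sigma}}{\widetilde{Z}}\cdot\frac{2\pi^{d/2}}{\Gamma(d/2)}\int_0^{r_0} f(r) r^{d-1}\,\di r$; substituting $\widetilde{Z} = \epsilon^{d/2}(2\pi)^{d/2}\sqrt{\det \Sigma}$ from \eqref{form_Z_tilde} cancels $\sqrt{\det \Sigma}$ and collapses the prefactor to $c_d \epsilon^{-d/2}$, giving the claim. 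I expect the main obstacle to be the careful justification of the remainder step: ensuring the Lagrange form applies (the required $C^3$ regularity is inherited from \cref{unique_minimizer}) and correctly tracking the factor $\epsilon$ multiplying $D^3 R$ so that the constant $(1 + \epsilon)\derbound/6$ matches the exponent of $f$; the remaining geometry and constant bookkeeping is routine.
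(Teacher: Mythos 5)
Your proposal is correct and follows essentially the same route as the paper: a Lagrange-form Taylor remainder bound $\abs{R_2(x)} \le \tfrac{(1+\epsilon)\derbound}{6}\norm{x-\xmap}_\Sigma^3$ (the paper's \cref{R2_third_diff_est}), the domination $\abs{e^{-t}-1}\le e^{\abs{t}}-1$, and the change of variables $u = \Sigma^{-1/2}(x-\xmap)$ with polar coordinates to collapse the prefactor to $c_d\epsilon^{-d/2}$. The only cosmetic difference is that you apply Taylor's theorem once to $I$ and split $D^3 I = D^3\Phi + \epsilon D^3 R$ at the intermediate point, whereas the paper expands $\Phi$ and $R$ separately (with possibly distinct intermediate points) and adds the two remainders; both yield the same constant.
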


\begin{proposition}[Far range estimate]
	\label{far_range_est}
	Suppose that \cref{quadratic_bound_I} holds. Then we have
	\begin{equation}
		\label{eq:far_range_est}
		\int_{\R^d \setminus U(r_0)} \bigabs{\exp\left(-\frac{1}{\epsilon}R_2(x)\right) - 1} \Lmuy(\di x) \le 
		\delta^{-\frac{d}{2}}\left(1 - \Xi_d\bigg(\frac{\delta r_0^2}{\epsilon}\bigg)\right),
	\end{equation}
	for all $r_0 \ge 0$.
\end{proposition}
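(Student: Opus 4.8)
The plan is to collapse the whole far-range integral onto a single Gaussian tail integral, and the device that makes this work is the elementary pointwise bound $\abs{a - 1} \le \max\{a,1\}$ for $a \ge 0$, which is sharper than the naive $\abs{a-1} \le a+1$ and is exactly what avoids a lossy factor of two. First I would rewrite the two competing exponentials against the reference density of $\Lmuy$. Since $\Lmuy(\di x) = \widetilde{Z}^{-1}\exp(-\frac{1}{2\epsilon}\norm{x-\xmap}_\Sigma^2)\,\di x$ and $R_2(x) + \frac12\norm{x-\xmap}_\Sigma^2 = I(x)$ by the definition of $R_2$, we have $\exp(-\frac1\epsilon R_2(x))\,\Lmuy(\di x) = \widetilde{Z}^{-1}\exp(-\frac1\epsilon I(x))\,\di x$, while $1\cdot\Lmuy(\di x) = \widetilde{Z}^{-1}\exp(-\frac{1}{2\epsilon}\norm{x-\xmap}_\Sigma^2)\,\di x$. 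Taking the pointwise maximum and using $\max\{e^{-a},e^{-b}\} = e^{-\min\{a,b\}}$ therefore yields
\[
	\max\Big\{\exp\big(-\tfrac1\epsilon R_2(x)\big),\,1\Big\}\,\Lmuy(\di x) = \frac{1}{\widetilde{Z}}\exp\Big(-\frac1\epsilon\min\big\{I(x),\,\tfrac12\norm{x-\xmap}_\Sigma^2\big\}\Big)\,\di x.
\]

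Next I would invoke \cref{quadratic_bound_I} together with the assumption $\delta \le 1$. The hypothesis gives $I(x) \ge \frac\delta2\norm{x-\xmap}_\Sigma^2$, and trivially $\frac12\norm{x-\xmap}_\Sigma^2 \ge \frac\delta2\norm{x-\xmap}_\Sigma^2$, so the minimum of the two exponents is bounded below by $\frac\delta2\norm{x-\xmap}_\Sigma^2$. Hence the integrand $\abs{\exp(-\frac1\epsilon R_2(x)) - 1}\,\Lmuy(\di x)$ is dominated pointwise by $\widetilde{Z}^{-1}\exp(-\frac{\delta}{2\epsilon}\norm{x-\xmap}_\Sigma^2)\,\di x$. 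This is the crux of the argument, and the step I expect to be the main obstacle to do cleanly: a single Gaussian bound must absorb \emph{both} the $\exp(-R_2/\epsilon)$ contribution (large where $R_2 < 0$) and the constant contribution (dominant where $R_2 > 0$); estimating these two pieces separately by the triangle inequality would double the resulting bound and miss the claimed constant $\delta^{-d/2}$. Folding them into one $\min$ of exponents before applying the quadratic lower bound is what keeps the estimate tight.

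Finally I would integrate the dominating density over $\Rd \setminus U(r_0)$. The substitution $u = \Sigma^{-1/2}(x-\xmap)$ turns $\norm{x-\xmap}_\Sigma$ into $\eunorm{u}$, maps $\Rd\setminus U(r_0)$ onto $\{\eunorm{u} > r_0\}$, and contributes a Jacobian $\sqrt{\det\Sigma}$ which cancels against $\widetilde{Z} = \epsilon^{d/2}(2\pi)^{d/2}\sqrt{\det\Sigma}$ from \eqref{form_Z_tilde}, leaving $(2\pi\epsilon)^{-d/2}\int_{\eunorm{u}>r_0}\exp(-\frac{\delta}{2\epsilon}\eunorm{u}^2)\,\di u$. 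Rescaling $u$ by $\sqrt{\delta/\epsilon}$ identifies this with $\delta^{-d/2}$ times the probability that a standard Gaussian vector lies outside the Euclidean ball of radius $r_0\sqrt{\delta/\epsilon}$, that is, outside the ball of radius $\sqrt{\delta r_0^2/\epsilon}$. By the interpretation of $\Xi_d$ as the standard Gaussian ball probability recorded just before \cref{main_estimate}, this equals $\delta^{-d/2}\big(1 - \Xi_d(\delta r_0^2/\epsilon)\big)$, which is precisely the asserted bound \eqref{eq:far_range_est}. The remaining work is only the change of variables and gamma-function bookkeeping.
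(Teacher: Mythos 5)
Your proof is correct and is essentially the paper's own argument in different clothing: the bound $\abs{a-1}\le\max\{a,1\}$ is exactly the paper's two-case estimate ($\abs{e^{-t}-1}\le 1$ for $t\ge 0$, $\abs{e^{-t}-1}\le e^{-t}$ for $t<0$), both routes arrive at the same dominating function $\widetilde{Z}^{-1}\exp\left(-\frac{\delta}{2\epsilon}\norm{x-\xmap}_\Sigma^2\right)$ via \cref{quadratic_bound_I} and $\delta\le 1$, and the concluding Gaussian tail computation matches the paper's \cref{tail_prob}. No gaps.
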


The proof of \cref{main_estimate} is now very short.
\begin{proof}[Proof of \cref{main_estimate}]
	By \cref{dTV_fund_est} we have
	\begin{equation*}
		\dTV(\mu^y,\Lmuy) \le \int_{\R^d} \bigabs{\exp\left(-\frac{1}{\epsilon}R_2(x)\right) - 1} \Lmuy(\di x).
	\end{equation*}
	Now, splitting up this integral into integrals over $U(r_0)$ and its complement and applying \cref{close_range_est,far_range_est} proves the statement.
\end{proof}

\begin{remark}
\label{rem:assumptions}
\begin{enumerate}
	\item Because of $I(\xmap) = 0$ and the necessary optimality condition $DI(\xmap) = 0$, the function $R_2(x) = I(x) - \frac12\norm{x - \xmap}_\Sigma^2$ defined in \cref{dTV_fund_est} is precisely the remainder of the second order Taylor polynomial of $I$ around $\xmap$. In \cref{close_range_est}, \cref{bound_third_diff} is used to control $R_2$ near the MAP estimate by bounding the third order differential of $I$. In \cref{far_range_est}, in turn, \cref{quadratic_bound_I} is used to control $R_2$ at a distance from $\xmap$ by bounding it from below by $-\frac{1 - \delta}{2}\norm{x - \xmap}_\Sigma^2$.
	
	\item The constant $\derbound \ge 0$ in \cref{bound_third_diff} quantifies the non-Gaussianity of the likelihood and the prior distribution and can be arbitarily large.
	\Cref{quadratic_bound_I} bounds the unnormalized log-posterior density from above by a multiple of the unnormalized log-density of the Laplace distribution, where the constant $\delta > 0$ represents the scaling factor and can be arbitrarily small. This restricts our results to posterior distributions whose tail does not decay slower than that of a Gaussian distribution.
	\Cref{quadratic_bound_I} can for example be violated if a prior distribution with heavier than Gaussian tail is chosen such as a Cauchy distribution and if the forward mapping is linear but singular.
	Our main interest lies on inverse problems with a posterior distribution that is not too different from a Gaussian distribution, since this is a setting in which the Laplace approximation can be expected to yield reasonable results.
	
	\item In case of a linear inverse problem and a Gaussian prior distribution, the Laplace approximation is exact, so that \cref{bound_third_diff,quadratic_bound_I} are trivially satisfied with $\derbound = 0$ and $\delta = 1$.
	We will see in \cref{sec:small_nonlin_limit} that \cref{bound_third_diff,quadratic_bound_I} are satisfied for nonlinear inverse problems with $\delta$ and $\derbound$ as given in \cref{Gt_delta,Gt_K} if the prior distribution is Gaussian and the nonlinearity of the forward mapping is small enough. In this case, the quadratic lower bound on $I$ in \cref{quadratic_bound_I} restricts the nonlinearity of the forward mapping to be small enough such that the tail of the posterior distribution does not decay slower than that of a Gaussian distribution.
	
	\item Note that neither in \cref{sec:estimate} nor in \cref{sec:param_free_est} we make use of the Gaussianity of the noise. Therefore, the results of these sections remain valid for non-Gaussian noise as long as the log-likelihood satisfies \cref{bound_third_diff,quadratic_bound_I}.
	In case of noise with a log-density $-\nu \in C^3(\Rd)$, the negative log-likelihood takes the form $\Phi(x) = \nu(y - G(x))$ and we have $I(x) = \nu(y - G(x)) + \epsilon R(x) + c$. Consider for example standard multivariate Cauchy noise, where
	\begin{equation*}
		\nu(\eta) = - \ln \left[ \frac{C}{\left(1 + \eunorm{\eta}^2\right)^\frac{d + 1}{2}} \right]
		= \frac{d + 1}{2} \ln \left(1 + \eunorm{\eta}^2\right) - \ln C
	\end{equation*}
	for all $\eta \in \Rd$.
	The derivatives of up to third order of $s \mapsto \ln (1 + s)$, $s \ge 0$, are bounded since they are continuous and converge to $0$ as $s$ tends to infinity. By the smoothness of $x \mapsto \eunorm{x}^2$, $\nu$ is therefore in $C^3(\Rd)$ and 
	\[ \norm{D^3\nu(x)} := \sup_{\eunorm{h_1}, \eunorm{h_2}, \eunorm{h_3} \le 1} \bigabs{D^3\nu(x)(h_1,h_2,h_3)} \] 
	is uniformly bounded.
	In case of a linear forward mapping, the uniform boundedness transfers to $\norm{D^3 \Phi(x)}$ and we can estimate 
	\[ \norm{D^3\Phi(x)}_\Sigma \le \bignorm{\Sigma^{-\frac12}}^{-3}\norm{D^3\Phi(x)}\]
	for any symmetric positive definite matrix $\Sigma$.
	
	\item We make \cref{bound_third_diff,quadratic_bound_I} globally, i.e., for all $x \in \Rd$, for the sake of simplicity. For a given $r_0 \ge 0$, \cref{main_estimate} remains valid if \cref{bound_third_diff} only holds for $\norm{x - \xmap}_\Sigma \le r_0$ and if \cref{quadratic_bound_I} only holds for $\norm{x - \xmap}_\Sigma \ge r_0$.
	This allows for prior distributions which are not supported on the whole space $\Rd$, as long as the support of the prior contains the set $U(r_0)$ and $R \in C^3(U(r_0))$. In this case, $R$ and $I$ are allowed to take values in $\overline{\R} := \R \cup \{\infty\}$ and we follow the convention $\exp(-\infty) = 0$.
	
	\item The constant $\derbound$ in \cref{bound_third_diff} can be replaced by a radial bound $\rho(\norm{x - \xmap}_\Sigma)$ with a monotonically increasing function $\rho$.
	This way, an estimate of the form \eqref{overall_est} can be obtained with $f$ replaced by
	\[
		\widetilde{f}(r) = \left(\exp\left(\frac{1 + \epsilon}{6\epsilon}\rho(r)r^3\right) - 1\right) \exp\left(-\frac{1}{2\epsilon}r^2\right).
	\]
\end{enumerate}
\end{remark}

\begin{remark}
Both the unnormalized posterior density $\exp(-\frac{1}{\epsilon}I(x))$ and the unnormalized Gaussian density $\exp(-\frac{1}{2\epsilon}\norm{x - \xmap}_\Sigma^2)$ attain their maximum $1$ in $\xmap$. The densities of $\mu^y$ and $\Lmuy$ themselves, however, take the values $1/Z$ and $1/\widetilde{Z}$ in $\xmap$ due to the different normalization, see \cref{arctan_densities}. For this reason, the probability of small balls around $\xmap$ under $\mu^y$ and $\Lmuy$ differs asymptotically by a factor of $\widetilde{Z}/Z$. This has several consequences in case that the normalization constants $Z$ and $\widetilde{Z}$ differ considerably.

One the one hand, credible regions around $\xmap$ may have considerably different size under the posterior distribution and its Laplace approximation.
On the other hand, the integrand
\[
	\frac12 \bigabs{\frac{\exp(-\frac{1}{\epsilon}I(x))}{Z} - \frac{\exp(-\frac{1}{2\varepsilon}\norm{x - \xmap}_\Sigma^2)}{\widetilde{Z}}}
\]
of the total variation distance $\dTV(\mu^y,\Lmuy)$ may, unlike the integrand of the fundamental estimate \eqref{dTV_fund_int}, have a significant amount of mass around $\xmap$, see \cref{arctan_densities}.
This means that a significant portion of the error when approximating the probability of an event under $\mu^y$ by that under $\Lmuy$ may be due to the difference in their densities \emph{near} the MAP estimate $\xmap$. So although the Laplace approximation is defined by the local properties of the posterior distribution in the MAP estimate, it is not necessarily a good local approximation around it.

A large difference in the normalization constants $Z$ and $\widetilde{Z}$ as mentioned above reflects that the log-posterior density cannot be approximated well globally by its second order Taylor polynomial around $\xmap$. 
In the proof of \cref{dTV_fund_est}, we saw that the difference in normalization is in fact bounded by the total variation of the unnormalized densities. The value of \cref{dTV_fund_est} lies in providing an estimate for the total variation error that only involves unnormalized densities.
\end{remark}

\begin{figure}[htp]
	\centering
	\begin{tikzpicture}[baseline]
		\begin{axis}[width=7.5cm, height=6cm,
			title=Probability densities,
			xmin=-6.0178637e-01, xmax=2.3982136e+00,
			ymin=0,
			thin,
			legend pos=north east,
			legend cell align=left,
			legend style={font=\small},
			legend entries={{$\mu^{y}$}, {$\Laplace_{\mu^{y}}$}},
		]
			\addplot [blue] table [x index=0, y index=1] {data/x_densities.dat};
			\addplot [red, densely dashed] table [x index=0, y index=3] {data/x_densities.dat};
		\end{axis}
	\end{tikzpicture}%
	\hskip 10pt
	\begin{tikzpicture}[baseline]
		\begin{axis}[width=7.5cm, height=6cm,
			title=Integrands,
			xmin=-6.0178637e-01, xmax=2.3982136e+00,
			ymin=0, ymax=0.14,
			thin,
			legend pos=north east,
			legend cell align=left,
			legend style={font=\small},
			legend entries={{$\dTV(\mu^y,\Laplace_{\mu^y})$}, fund.~estimate},
		]
			\addplot [green] table [x index=0, y index=1] {data/x_integrands.dat};
			\addplot [cyan, densely dashed] table [x index=0, y index=3] {data/x_integrands.dat};
		\end{axis}
	\end{tikzpicture}%
	\caption{The probability densities of a posterior distribution $\mu^y$ and its Laplace approximation $\Lmuy$ (left), as well as the integrands of the total variation distance between $\mu^y$ and $\Lmuy$ and of the fundamental estimate \eqref{dTV_fund_int} (right).}
	\label{arctan_densities}
\end{figure}
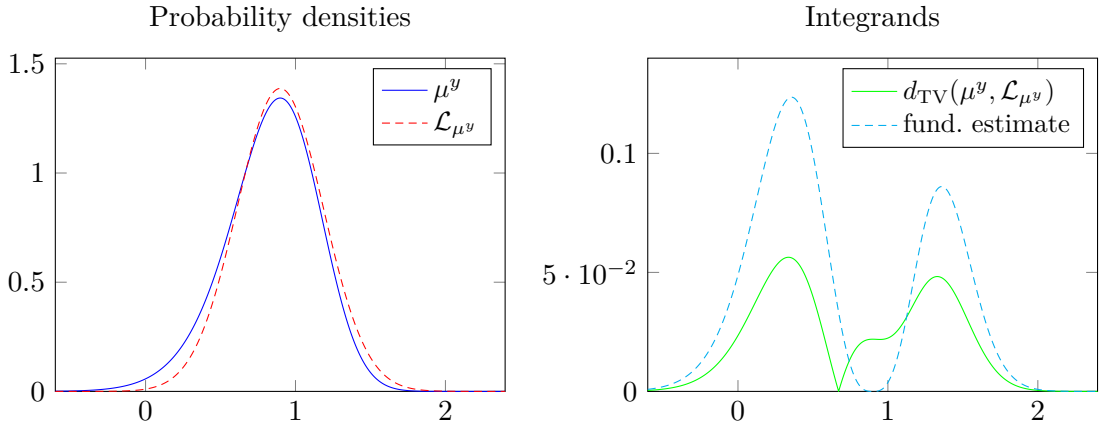

In the following subsections, we present the proofs of our close and far range estimate, and characterize the optimal choice of $r_0$.

\subsection{Proof of Proposition \ref{close_range_est}}

We consider the close range integral
\begin{equation*}
	\int_{U(r_0)} \bigabs{\exp\left(-\frac{1}\epsilon R_2(x)\right) - 1} \Lmuy(\di x)
\end{equation*}
over the $\Sigma$-norm ball with radius $r_0 \ge 0$.
The proof of our close range estimate is based upon the following estimate for the remainder term $R_2(x)$.
\begin{lemma}
	\label{R2_third_diff_est}
	If \cref{bound_third_diff} holds, then we have
	\begin{equation}
		\label{eq:R2_ineq}
		\abs{R_2(x)} \le \frac{1 + \epsilon}{6} K \norm{x-\xmap}_\Sigma^3 \quad \text{for all}~x \in \Rd.
	\end{equation}
\end{lemma}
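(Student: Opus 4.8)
The plan is to recognize $R_2$ as the second-order Taylor remainder of $I$ about the MAP estimate and then control it through a uniform bound on the third differential $D^3 I$. Recall that $I(x) = \Phi(x) + \epsilon R(x) - \Phi(\xmap) - \epsilon R(\xmap)$, so $I(\xmap) = 0$, and the optimality condition gives $DI(\xmap) = 0$. Together with the identity $\norm{x-\xmap}_\Sigma^2 = D^2 I(\xmap)(x-\xmap,x-\xmap)$ — which holds because $\Sigma^{-1} = D^2I(\xmap)$ — this shows that $\frac12\norm{x-\xmap}_\Sigma^2$ is precisely the full second-order Taylor polynomial of $I$ at $\xmap$ (the constant and linear terms vanish). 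Hence $R_2(x) = I(x) - \frac12\norm{x-\xmap}_\Sigma^2$ is exactly the order-two Taylor remainder.

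To exploit this, I would reduce to one variable. Fixing $x \in \Rd$, set $h := x - \xmap$ and consider the scalar function $g(t) := I(\xmap + t h)$ for $t \in [0,1]$. By \cref{unique_minimizer} we have $I \in C^3$, so $g \in C^3$ with $g'''(t) = D^3 I(\xmap + t h)(h,h,h)$, and $R_2(x) = g(1) - g(0) - g'(0) - \tfrac12 g''(0)$. The one-dimensional Taylor theorem with Lagrange remainder then yields $R_2(x) = \tfrac16 g'''(\theta) = \tfrac16 D^3 I(\xi)(h,h,h)$ for some $\xi = \xmap + \theta h$ on the segment between $\xmap$ and $x$. (Equivalently one may use the integral form $R_2(x) = \tfrac12\int_0^1 (1-t)^2 g'''(t)\,\di t$; since $\int_0^1 (1-t)^2\,\di t = \tfrac13$, this produces the same factor $\tfrac16$.)

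It then remains to bound $\bigabs{D^3 I(\xi)(h,h,h)}$. Since additive constants vanish under differentiation, $D^3 I = D^3\Phi + \epsilon\, D^3 R$, so by the triangle inequality for the $\Sigma$-operator norm and \cref{bound_third_diff},
\[
	\norm{D^3 I(\xi)}_\Sigma \le \norm{D^3\Phi(\xi)}_\Sigma + \epsilon\norm{D^3 R(\xi)}_\Sigma \le (1+\epsilon)\derbound.
\]
By the very definition of $\norm{D^3 I(\xi)}_\Sigma$ as the supremum of $\bigabs{D^3 I(\xi)(h_1,h_2,h_3)}$ over unit $\Sigma$-balls, the homogeneous evaluation satisfies $\bigabs{D^3 I(\xi)(h,h,h)} \le \norm{D^3 I(\xi)}_\Sigma\,\norm{h}_\Sigma^3$. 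Combining the two displays gives
\[
	\abs{R_2(x)} \le \frac{1+\epsilon}{6}\derbound\,\norm{x-\xmap}_\Sigma^3,
\]
which is exactly \eqref{eq:R2_ineq}.

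The argument is essentially routine once $R_2$ is identified as the Taylor remainder. The only points that require care — and which I do not expect to present a genuine obstacle — are verifying that $\tfrac12\norm{\,\cdot\,-\xmap}_\Sigma^2$ is the \emph{complete} second-order Taylor polynomial (hinging on $DI(\xmap)=0$ and $\Sigma^{-1}=D^2I(\xmap)$) and that the scalar reduction cleanly transfers the $\Sigma$-norm bound of \cref{bound_third_diff} to the third derivative of $g$.
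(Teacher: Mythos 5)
Your proof is correct and follows essentially the same route as the paper: identify $R_2$ as the second-order Taylor remainder around $\xmap$ and bound it via the mean-value (Lagrange) form of the remainder together with \cref{bound_third_diff}. The only cosmetic difference is that you apply Taylor's theorem to $I$ as a whole and then split $\norm{D^3I(\xi)}_\Sigma \le \norm{D^3\Phi(\xi)}_\Sigma + \epsilon\norm{D^3R(\xi)}_\Sigma$, whereas the paper first decomposes $R_2 = R_{2,\Phi} + \epsilon R_{2,R}$ and applies the mean-value form to $\Phi$ and $R$ separately (two intermediate points instead of your single $\xi$); both orderings yield the identical constant $\frac{1+\epsilon}{6}\derbound$.
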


\begin{proof}
We set $h := x - \xmap$ and write the remainder of the second order Taylor polynomial of $\Phi$ in mean-value form as
\begin{equation*}
	R_{2,\Phi}(x) := \Phi(x) - \Phi(\xmap) - D\Phi(\xmap)(h) - \frac12 D^2\Phi(\xmap)(h,h)
	= \frac16 D^3\Phi(z)(h,h,h)
\end{equation*}
for some $z \in \xmap + [0,1]h$. Since $\xmap + [0,1]h \subset U(\norm{x - \xmap}_\Sigma)$, we can now use the multilinearity of $D^3\Phi(z)$ to express $R_{2,\Phi}$ as
\begin{equation*}
	R_{2,\Phi}(x) = \frac16 D^3\Phi(z)\left(\frac{h}{\norm{h}_\Sigma},\frac{h}{\norm{h}_\Sigma},\frac{h}{\norm{h}_\Sigma}\right) \norm{h}_\Sigma^3
\end{equation*}
for some $z \in U(\norm{x - \xmap}_\Sigma)$, and estimate
\[
	\abs{R_{2,\Phi}(x)} \le \frac16 \norm{D^3\Phi(z)}_\Sigma \norm{h}_\Sigma^3
	\le \frac16 K \norm{h}_\Sigma^3
\]
using \cref{bound_third_diff}.
By proceeding similarly for $R$, we now obtain
\begin{equation*}
	\abs{R_2(x)} \le \abs{R_{2,\Phi}(x)} + \epsilon\abs{R_{2,R}(x)} 
	\le \frac16 K \norm{h}_\Sigma^3 + \frac{\varepsilon}{6} K \norm{h}_\Sigma^3.
	\qedhere
\end{equation*}
\end{proof}

Now, we can prove our close range estimate.
\begin{proof}[Proof of \cref{close_range_est}]
By \cref{R2_third_diff_est} and \eqref{form_Z_tilde}, we have
\begin{align*}
	\int_{U(r_0)} \bigabs{\exp\left(-\frac{1}\epsilon R_2(x)\right) - 1} \Lmuy(\di x)
	&\le \frac{1}{\widetilde{Z}} \int_{U(r_0)} f(\norm{x - \xmap}_\Sigma) \di x \\
	&= \frac{1}{\widetilde{Z}} \sqrt{\det \Sigma} \int_{\{u \in \Rd: \eunorm{u} \le r_0\}} f(\eunorm{u}) \di u \\
	&= \epsilon^{-\frac{d}{2}}(2\pi)^{-\frac{d}{2}} \cdot d \kappa_d \int_0^{r_0} f(r) r^{d-1} \di r,
\end{align*}
since $\eunorm{u} = \norm{x - \xmap}_\Sigma$. Here
\[
	\kappa_d = \frac{\pi^{d/2}}{\Gamma(d/2 + 1)}
\]
denotes the \emph{volume} of the $d$-dimensional Euclidean unit ball ($d\kappa_d$ is its surface area).
Using the fundamental recurrence $\Gamma(z+1) = z\Gamma(z)$, we can write
\begin{equation*}
	d\kappa_d = \frac{2\pi^{d/2}}{\Gamma(d/2)}.
	\qedhere
\end{equation*}
\end{proof}

\subsection{Proof of Proposition \ref{far_range_est}}

Now, we consider the integral
\begin{equation*}
	\int_{\Rd \setminus U(r_0)} \bigabs{\exp\left(-\frac{1}\epsilon R_2(x)\right) - 1} \Lmuy(\di x)
\end{equation*}
over the space outside of a $\Sigma$-norm ball with radius $r_0 \ge 0$.
In the proof of our far range estimate, the following expression is used to describe the probability of $\Rd \setminus U(r_0)$ under $\Lmuy$. Let $\Gamma(a,z)$ denote the upper incomplete gamma function.
\begin{lemma}
	\label{tail_prob}
	Let $\nu = \Normal(\xmap, \delta^{-1}\epsilon\Sigma)$ with $\delta > 0$.
	Then,
	\begin{equation*}
		\nu\left(\Rd \setminus U(r_0)\right)
		= \frac{1}{\Gamma(d/2)}\Gamma\bigg(\frac{d}{2},\frac{\delta r_0^2}{2\epsilon}\bigg) \quad \text{for all }r_0 \ge 0.
	\end{equation*}
\end{lemma}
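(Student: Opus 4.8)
The plan is to reduce the Gaussian tail probability to a one-dimensional radial integral and then identify that integral with the upper incomplete gamma function. First I would standardize the covariance by the change of variable $u := \Sigma^{-\frac12}(x - \xmap)$, which pushes $\nu = \Normal(\xmap, \delta^{-1}\epsilon\Sigma)$ forward to the isotropic Gaussian $\Normal(0, \delta^{-1}\epsilon I_d)$. The key point is that this same substitution converts the $\Sigma$-norm ball into a Euclidean ball: since $\norm{h}_\Sigma^2 = h^\T \Sigma^{-1} h = \eunorm{\Sigma^{-\frac12}h}^2$, we have $\norm{x - \xmap}_\Sigma = \eunorm{u}$, so that
\[
	\nu\left(\Rd \setminus U(r_0)\right) = \Prob\big(\eunorm{u} > r_0\big), \qquad u \sim \Normal\!\big(0, \delta^{-1}\epsilon I_d\big).
\]

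Writing $\sigma^2 := \delta^{-1}\epsilon$, I would next compute this tail probability in polar coordinates, exactly as in the proof of \cref{close_range_est}. Because the density of $\Normal(0, \sigma^2 I_d)$ is radially symmetric, integrating over $\{\eunorm{u} > r_0\}$ and using that the sphere of radius $r$ has surface area $d\kappa_d\, r^{d-1}$ with $d\kappa_d = \frac{2\pi^{d/2}}{\Gamma(d/2)}$ yields
\[
	\Prob\big(\eunorm{u} > r_0\big) = (2\pi\sigma^2)^{-\frac d2}\,\frac{2\pi^{d/2}}{\Gamma(d/2)} \int_{r_0}^\infty r^{d-1}\exp\left(-\frac{r^2}{2\sigma^2}\right) \di r.
\]

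Finally, the substitution $t = r^2/(2\sigma^2)$ turns the radial integral into $\tfrac12 (2\sigma^2)^{d/2}\int_{r_0^2/(2\sigma^2)}^\infty t^{\frac d2 - 1} e^{-t}\di t$, and the accumulated powers of $\sigma$ and $\pi$ cancel the prefactor down to $1/\Gamma(d/2)$, leaving precisely $\frac{1}{\Gamma(d/2)}\Gamma\big(\tfrac d2, r_0^2/(2\sigma^2)\big)$ by the definition of the upper incomplete gamma function. Substituting back $\sigma^2 = \delta^{-1}\epsilon$ gives the lower limit $\delta r_0^2/(2\epsilon)$ and the claimed identity. (Alternatively, one can shortcut the last two steps by noting that $\eunorm{u}^2/\sigma^2$ is a $\chi^2_d$ variable, whose survival function is the normalized upper incomplete gamma by definition.) I do not expect a genuine obstacle here; the only step demanding care is the \emph{bookkeeping of constants} in the change of variables, ensuring that the normalization $(2\pi\sigma^2)^{-d/2}$, the surface factor $\frac{2\pi^{d/2}}{\Gamma(d/2)}$, and the Jacobian of $t = r^2/(2\sigma^2)$ collapse exactly to $1/\Gamma(d/2)$.
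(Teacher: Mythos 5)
Your proposal is correct and follows essentially the same route as the paper's proof: the change of variable $u = \Sigma^{-\frac12}(x-\xmap)$, polar coordinates with the surface area $d\kappa_d = 2\pi^{d/2}/\Gamma(d/2)$, and the substitution $t = r^2/(2\sigma^2)$ reducing the radial integral to the upper incomplete gamma function. The only cosmetic difference is that the paper writes $d\kappa_d$ via $\Gamma(d/2+1)$ and cleans up with the recurrence $\Gamma(z+1)=z\Gamma(z)$ at the end, whereas you absorb that step into the surface-area constant from the start.
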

\begin{proof}
We compute the tail integral explicitly using a local parameter and polar coordinates. 
This yields
\begin{align*}
	\nu(\{\norm{x - \xmap}_\Sigma \ge r_0\}) &= \frac{\delta^{d/2}}{\epsilon^{d/2}(2\pi)^{d/2}\sqrt{\det \Sigma}} \int_{\{\norm{x - \xmap}_\Sigma \ge r_0\}} \exp\left(-\frac{\delta}{2\varepsilon}\norm{x - \xmap}_\Sigma^2\right) \di x \\
	&= \frac{\delta^{d/2}}{\epsilon^{d/2}(2\pi)^{d/2}} \int_{\{\eunorm{u} \ge r_0\}} \exp\left(-\frac{\delta}{2\epsilon}\eunorm{u}^2\right) \di u \\
	&= \frac{\delta^{d/2}}{\epsilon^{d/2}(2\pi)^{d/2}} d\kappa_d \int_{r_0}^\infty \exp\left(-\frac{\delta}{2\epsilon}r^2\right) r^{d-1} \di r.
\end{align*}
We can express this integral in terms of the upper incomplete gamma function by substituting $s = \delta r^2/2\epsilon$ (note that $r'(s) = 2^{-1/2}\epsilon^{1/2}\delta^{-1/2}s^{-1/2}$) as
\begin{equation*}
	\int_{r_0}^\infty e^{-\frac{\delta}{2\epsilon}r^2} r^{d-1} \di r 
	= \int_{\frac{\delta r_0^2}{2\epsilon}}^\infty e^{-s} 2^{\frac{d}{2}-1}\epsilon^{\frac{d}{2}}\delta^{-\frac{d}{2}} s^{\frac{d}{2} - 1} \di s 
	= 2^{\frac{d}{2} - 1}\varepsilon^{\frac{d}{2}}\delta^{-\frac{d}{2}} \Gamma\left(\frac{d}{2}, \frac{\delta r_0^2}{2\epsilon}\right).
\end{equation*}
This leads to
\begin{equation*}
	\nu(\{\norm{x - \xmap}_\Sigma \ge r_0\})
	= \frac{d}{2}\frac{1}{\Gamma(d/2 + 1)}\Gamma\left(\frac{d}{2}, \frac{\delta r_0^2}{2\epsilon}\right).
\end{equation*}
Now, using the fundamental recurrence $\Gamma(z + 1) = z\Gamma(z)$ completes the proof.
\end{proof}

Now, we can prove our far range estimate.
\begin{proof}[Proof of \cref{far_range_est}]
Let $x \in \R^d \setminus U(r_0)$. We distinguish between two cases.
First, consider the case that $R_2(x) \ge 0$.
For $t \ge 0$, the estimate $\abs{e^{-t} - 1} \le 1$ holds.
This implies
\[
	\bigabs{\exp\left(-\frac{1}{\varepsilon}R_2(x)\right) - 1} \le 1.
\]
Next, consider the case that $R_2(x) < 0$.
By \cref{quadratic_bound_I}, we have
\[
	R_2(x) = I(x) - \frac12\norm{x - \xmap}_\Sigma^2 \ge \frac{\delta - 1}{2}\norm{x - \xmap}_\Sigma^2
\]
for all $x \in \R^d$.
For $t \le 0$ we have $\abs{e^{-t} - 1} = e^{-t} - 1 < e^{-t}$, and thus
\[
	\bigabs{\exp\left(-\frac{1}{\varepsilon}R_2(x)\right) - 1}
	\le \exp\left(-\frac{1}{\epsilon}R_2(x)\right)
	\le \exp\left(-\frac{\delta - 1}{2\epsilon}\norm{x - \xmap}_\Sigma^2\right).
\]
Together, this shows that
\[
	\bigabs{\exp\left(-\frac{1}{\varepsilon}R_2(x)\right) - 1} \le \exp\left(-\frac{\delta - 1}{2\epsilon}\norm{x - \xmap}_\Sigma^2\right)
\]
for all $x \in \R^d$.
Now it follows that
\[
	\bigabs{\exp\left(-\frac{1}{\varepsilon}R_2(x)\right) - 1} \exp\left(-\frac{1}{2\epsilon}\norm{x - \xmap}_\Sigma^2\right) \le \exp\left(-\frac{\delta}{2\epsilon}\norm{x - \xmap}_\Sigma^2\right)
\]
for all $x \in \R^d$.
This yields
\begin{align*}
	&\int_{\R^d \setminus U(r_0)} \bigabs{\exp\left(-\frac{1}{\epsilon}R_2(x)\right) - 1} \Lmuy(\di x) \\
	&\le \frac{1}{(2\pi\epsilon)^{d/2}\sqrt{\det \Sigma}} \int_{\R^d \setminus U(r_0)} \exp\left(-\frac{\delta}{2\epsilon}\norm{x - \xmap}_\Sigma^2\right) \di x
\end{align*}
Now, the proposition follows from
\begin{align*}
	&\frac{\delta^{d/2}}{(2\pi\epsilon)^{d/2}\sqrt{\det \Sigma}} \int_{\R^d \setminus U(r)} \exp\left(-\frac{\delta}{2\epsilon}\norm{x - \xmap}_\Sigma^2\right) \di x \\
	&= \frac{1}{\Gamma(d/2)}\Gamma\bigg(\frac{d}{2}, \frac{\delta r^2}{2\epsilon}\bigg)
	= 1 - \Xi_d\bigg(\frac{\delta r_0^2}{\epsilon}\bigg),
\end{align*}
which in turn holds by \cref{tail_prob} and the identity $\Gamma(a,z) = \Gamma(a) - \gamma(a,z)$.
\end{proof}

\subsection{Optimal choice of the parameter}

We have the following necessary optimality condition for the parameter $r_0$ in \cref{main_estimate}.

\begin{proposition}
\label{prop:optimality}
The optimal choice of $r_0$ in the error bound \eqref{overall_est} is either $0$ or satisfies
\begin{equation*}
	\exp\left(\frac{(1 + \epsilon)\derbound}{6\epsilon} r_0^3\right) - 1
	- \exp\left(\frac{1 - \delta}{2\epsilon}r_0^2\right)  = 0.
\end{equation*}
\end{proposition}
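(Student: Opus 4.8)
The plan is to treat the right-hand side of \eqref{overall_est} as a single-variable function $E(r_0) := E_1(r_0) + E_2(r_0)$ on $[0,\infty)$ and to minimize it by elementary calculus. By \cref{asymp_E12}, $E$ is continuous with $E(0) = 2\delta^{-\frac d2} < \infty$ and $E(r_0) \to \infty$ as $r_0 \to \infty$, so the infimum is attained at some finite minimizer. Since the admissible domain $[0,\infty)$ has a boundary point at $0$, any minimizer is either $r_0 = 0$ or an interior point at which $E'(r_0) = 0$. It therefore suffices to show that the interior first-order condition is equivalent to the displayed equation.

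Next I would compute the two derivatives separately. The integrand $r \mapsto f(r)r^{d-1}$ is continuous, so the fundamental theorem of calculus yields
\[
	E_1'(r_0) = c_d \epsilon^{-\frac d2}\, f(r_0)\, r_0^{d-1}.
\]
For $E_2$ I would differentiate $\Xi_d(t) = \gamma(d/2,t/2)/\Gamma(d/2)$ via the chain rule, using the identity $\frac{\di}{\di z}\gamma(a,z) = z^{a-1}e^{-z}$ together with $t = \delta r_0^2/\epsilon$ and $\frac{\di t}{\di r_0} = 2\delta r_0/\epsilon$. The decisive point is that after collecting the powers of $\delta$, $\epsilon$, $2$ and $r_0$, the numerical prefactor collapses to exactly $c_d\epsilon^{-\frac d2}$, giving the clean expression
\[
	E_2'(r_0) = -c_d \epsilon^{-\frac d2}\, r_0^{d-1}\exp\left(-\frac{\delta r_0^2}{2\epsilon}\right).
\]

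Adding the two derivatives and factoring out the common positive term $c_d\epsilon^{-\frac d2} r_0^{d-1}$ gives
\[
	E'(r_0) = c_d\epsilon^{-\frac d2}\, r_0^{d-1}\left[f(r_0) - \exp\left(-\frac{\delta r_0^2}{2\epsilon}\right)\right].
\]
For $r_0 > 0$ the prefactor is strictly positive, so $E'(r_0) = 0$ is equivalent to $f(r_0) = \exp(-\delta r_0^2/(2\epsilon))$. Substituting the definition \eqref{eq:deff} of $f$ and multiplying both sides by $\exp(r_0^2/(2\epsilon))$ turns the Gaussian factors into $\exp((1-\delta)r_0^2/(2\epsilon))$ on the right-hand side, and rearranging then produces exactly the stated equation.

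The one step that requires care is the derivative of $E_2$: it hinges on recalling the derivative of the lower incomplete gamma function and on the careful bookkeeping of the constants. The whole argument works out cleanly precisely because this bookkeeping reproduces the same prefactor $c_d\epsilon^{-\frac d2} r_0^{d-1}$ that appears in $E_1'$, so that it cancels against the bracket and leaves a scalar equation in $r_0$. Everything else is a routine first-order optimality argument on a half-line.
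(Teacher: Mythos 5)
Your proof is correct and follows essentially the same route as the paper: differentiate $E_1$ via the fundamental theorem of calculus, differentiate $E_2$ through the lower incomplete gamma function (your prefactor $-c_d\epsilon^{-d/2}r_0^{d-1}\exp(-\delta r_0^2/(2\epsilon))$ matches the paper's expression exactly), set the sum to zero, and rearrange. Your added observation that a minimizer exists because $E$ is continuous with $E(r_0)\to\infty$ as $r_0\to\infty$ is a small refinement of the paper's ``clearly'' but does not change the argument.
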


\begin{proof}
The terms $E_1$ and $E_2$ are differentiable on $[0,\infty)$. Clearly, the optimal $r_0$ is either $0$ or satisfies the identity
\begin{equation}
	\label{eq:Eidentity}
	E'(r_0) = E_1'(r_0) + E_2'(r_0) = 0.
\end{equation}
We have that
\begin{equation*}
	E_1'(r_0) = c_d \epsilon^{-\frac d2}f(r_0) r_0^{d-1} = c_d \epsilon^{-\frac d2}\left(\exp\left(\frac{(1 + \epsilon)\derbound}{6\epsilon} r_0^3\right) - 1\right) \exp\left(-\frac{1}{2\epsilon} r_0^2\right) r_0^{d-1}
\end{equation*}
and
\begin{equation*}
	E_2'(r_0) = -\frac{2\delta^{1-\frac d2} r_0}{\epsilon} \Xi_d'\left(\frac{\delta r_0^2}\epsilon \right)
	= -\frac{2}{2^{d/2}\Gamma(d/2)} \cdot \frac{r_0^{d-1}}{\epsilon^{d/2}} \exp\left(-\frac{\delta}{2\epsilon}r_0^2\right)
\end{equation*}
Identity \eqref{eq:Eidentity} now corresponds to
\begin{equation*}
	\exp\left(\frac{(1 + \epsilon)\derbound}{6\epsilon} r_0^3\right) - 1
	- \exp\left(\frac{1 - \delta}{2\epsilon}r_0^2\right) = 0
\end{equation*}
which yields the result.
\end{proof}

\begin{remark}
The right hand side of the far range estimate \eqref{eq:far_range_est} can be written as
\[
	c_d \epsilon^{-\frac{d}{2}} \int_{r_0}^\infty \exp \left(-\frac{\delta}{2\epsilon}r^2\right) r^{d-1} \di r,
\]
where $c_d$ is defined as in \cref{main_estimate}. The optimal choice of $r_0$ is therefore one for which the integrands $f(r_0)r^{d-1}$ and $\exp(-\delta r_0^2/2\epsilon)r^{d-1}$ of the close and the far range estimate take the same value.
\end{remark}


\section{Explicit error estimate}
\label{sec:param_free_est}

Here, we present a non-asymptotic error estimate in terms of $\derbound$, $\delta$, $\epsilon$, and the problem dimension $d$.
While \cref{main_estimate} constitutes a non-asymptotic error estimate and is the sharpest of our three main results, it is not immediately clear how the non-Gaussianity of the likelihood and the prior distribution, as quantified by the constant $\derbound$ in \cref{bound_third_diff}, the noise level, and the problem dimension affect the error bound. The purpose of the following theorem is to make this influence more explicit.
\begin{theorem}
	\label{nonasymp_est_dim}
	Suppose that $I$, $\Phi$, and $R$ satisfy \cref{unique_minimizer,bound_third_diff,quadratic_bound_I}.
	If $K$, $\delta$, $\epsilon$, and $d$ satisfy
	\begin{align}
		\label{cond_E2_dim}
		\frac{6\delta^\frac32}{(1 + \epsilon)\epsilon^\frac12 K} &\geq \max \left\{8d^\frac32, \left(8 \ln \left(\frac{2}{C(1 + \epsilon)\epsilon^\frac12 K\delta^\frac{d}{2}}\cdot\frac{\Gamma\big(\frac{d}{2}\big)}{\Gamma\big(\frac{d}{2} + \frac32\big)}\right)\right)^\frac32\right\}
	\end{align}
	with $C := \sqrt{2}e/3$, then
	\begin{equation}
		\label{eq:nonasymp_est_dim}
		\dTV\left(\mu^{y}, \Laplace_{\mu^{y}}\right) \leq 2C(1 + \epsilon)\epsilon^\frac12 \derbound \frac{\Gamma\left(\frac{d}{2} + \frac32\right)}{\Gamma\left(\frac{d}{2}\right)}.
	\end{equation}
\end{theorem}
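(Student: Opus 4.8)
The plan is to apply the central bound $\dTV(\mu^y,\Lmuy)\le E_1(r_0)+E_2(r_0)$ of \cref{main_estimate} at a single, explicitly chosen radius and to show that each term is bounded by one half of the claimed right-hand side. The natural choice is the largest radius for which the cubic exponent in $f$ is still controlled, namely
\[
	r_0 := \left(\frac{6\epsilon}{(1+\epsilon)K}\right)^{\frac13}, \quad\text{so that}\quad \frac{(1+\epsilon)K}{6\epsilon}r^3\le 1 \text{ on } [0,r_0].
\]
With this choice the balancing condition \eqref{cond_E2_dim} is needed only to control the far term $E_2$; the near term $E_1$ can be handled unconditionally. Throughout I abbreviate $a:=\frac d2$ and $C=\frac{\sqrt2 e}{3}$.

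For $E_1$ I would first linearize the integrand \eqref{eq:deff}. Since $\frac{(1+\epsilon)K}{6\epsilon}r^3\le 1$ on $[0,r_0]$, the elementary inequality $e^s-1\le s\,e^s$ (with $s\le1$) gives $f(r)\le e\,\frac{(1+\epsilon)K}{6\epsilon}\,r^{3}\exp(-\frac{1}{2\epsilon}r^2)$ there. Inserting this into \eqref{eq:defM1}, enlarging the domain to $[0,\infty)$, and evaluating the Gaussian moment $\int_0^\infty r^{d+2}\exp(-\frac{1}{2\epsilon}r^2)\,\di r = 2^{(d+1)/2}\epsilon^{(d+3)/2}\Gamma(\frac d2+\frac32)$ (substitute $u=r^2/2\epsilon$), all powers of $\epsilon$ and the factor $c_d=2^{1-d/2}/\Gamma(\frac d2)$ collapse to exactly
\[
	E_1(r_0)\le C(1+\epsilon)\epsilon^{\frac12}K\,\frac{\Gamma\!\left(\frac d2+\frac32\right)}{\Gamma\!\left(\frac d2\right)},
\]
which is half of the target. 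I expect this to be a routine, if bookkeeping-heavy, computation requiring no further hypotheses.

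For $E_2$ I would use the representation from the proof of \cref{far_range_est}, namely $E_2(r_0)=\frac{\delta^{-a}}{\Gamma(a)}\Gamma(a,z)$ with $z:=\delta r_0^2/2\epsilon$, together with the elementary tail bound $\Gamma(a,z)\le z^a e^{-z}$, valid once $z\ge a$ (which follows from the tangent-line estimate $\ln t\le\ln z+(t-z)/z$, $t\ge z$, for $a\ge1$, and from monotonicity of $t^{a-1}$ for $a<1$). A direct computation shows that our choice of $r_0$ yields the clean identities $z=L^{2/3}/2$, where $L:=6\delta^{3/2}/((1+\epsilon)\epsilon^{1/2}K)$ is precisely the left-hand side of \eqref{cond_E2_dim}, and $C(1+\epsilon)\epsilon^{1/2}K=e\,\delta^{3/2}z^{-3/2}$. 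Using the tail bound and substituting these identities, the desired estimate $E_2(r_0)\le C(1+\epsilon)\epsilon^{1/2}K\,\Gamma(\frac d2+\frac32)/\Gamma(\frac d2)$ reduces, after multiplying through, to the single scalar inequality
\[
	z^{\frac d2+\frac32}e^{-z}\le e\,\delta^{\frac d2+\frac32}\,\Gamma\!\left(\tfrac d2+\tfrac32\right), \quad\text{equivalently}\quad \frac{z^{\frac d2}e^{-z}}{\Gamma(\frac d2)}\le \frac{2}{W},
\]
where $W$ is exactly the argument of the logarithm in \eqref{cond_E2_dim}.

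The main obstacle is this last inequality, and it is here that the two branches of the maximum in \eqref{cond_E2_dim} enter. Taking logarithms, it suffices to show $z\ge a\ln z-\ln\Gamma(a)-\ln 2+\ln W$. The first branch $L\ge 8d^{3/2}$ translates (via $8^{2/3}=4$) into $z\ge 2d$, i.e. $z/a\ge 4$; this both guarantees $z\ge a$ so that the tail bound applies, and, combined with a Stirling-type lower bound $\ln\Gamma(a)\ge a\ln a-a-\frac12\ln a$, controls the super-linear term through $a\ln z-\ln\Gamma(a)\le a(\ln(z/a)+1)+\frac12\ln a\le \frac34 z$, since $\frac34 w\ge \ln w+1$ for all $w\ge 4$. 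The second branch $L\ge(8\ln W)^{3/2}$ gives exactly $\ln W\le z/4$, which absorbs the remaining $\ln W$ term. Adding the two contributions establishes the scalar inequality, hence $E_2(r_0)\le \tfrac12\cdot(\text{target})$; together with the bound on $E_1$ this proves \eqref{eq:nonasymp_est_dim}. I would dispose of the degenerate case $W\le 2$ separately, where $\Gamma(\frac d2,z)/\Gamma(\frac d2)\le 1\le 2/W$ holds trivially and the second branch is vacuous.
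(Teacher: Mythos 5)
Your proposal is correct, and its first half coincides with the paper's proof: the same choice $r_0=\left(6\epsilon/((1+\epsilon)K)\right)^{1/3}$, the same linearization $e^s-1\le s e^s\le es$ for $s\le 1$, and the same Gaussian-moment computation yielding $E_1(r_0)\le C(1+\epsilon)\epsilon^{1/2}K\,\Gamma\big(\tfrac d2+\tfrac32\big)/\Gamma\big(\tfrac d2\big)$ (the paper phrases the final step as $\gamma(a,z)\le\Gamma(a)$ rather than extending the integral to $[0,\infty)$, which is the same thing). Where you genuinely diverge is the far-range term. The paper bounds $E_2(r_0)$ by a Gaussian concentration inequality (\cref{exp_tail_est}, adapted from \cite{SSW:2020}): the first branch of \eqref{cond_E2_dim} guarantees $r_0\ge 2(d\epsilon/\delta)^{1/2}$ so that the lemma applies and gives $E_2(r_0)\le 2\delta^{-d/2}\exp(-\delta r_0^2/(8\epsilon))$, and the second branch bounds this by the target in one line. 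You instead work directly with the incomplete-gamma representation, using the elementary tail bound $\Gamma(a,z)\le z^a e^{-z}$ together with a Stirling lower bound on $\ln\Gamma(a)$ to absorb $a\ln z$ into $\tfrac34 z$, while the second branch absorbs $\ln W\le z/4$; your separate treatment of the case $W\le 2$ via $\Gamma(a,z)\le\Gamma(a)$ is indeed necessary (otherwise the second branch is vacuous) and correct. One small imprecision: for $d=1$, i.e.\ $a=\tfrac12$, your monotonicity argument gives $\Gamma(a,z)\le z^{a-1}e^{-z}$, and passing to $z^a e^{-z}$ needs $z\ge 1$, not merely $z\ge a$; this is harmless here because the first branch forces $z\ge 2d\ge 2$. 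The trade-off between the two routes: the paper's is shorter and reuses a tail estimate of independent interest, whereas yours is self-contained and purely elementary, avoiding any concentration-of-measure input at the price of Stirling bookkeeping; both consume the two branches of hypothesis \eqref{cond_E2_dim} for exactly analogous purposes and arrive at the same constant.
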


\begin{remark}
Condition \eqref{cond_E2_dim} can be interpreted in the following way. For given $d$ and $\delta$, it imposes an upper bound on the noise level $\epsilon^{1/2}$ and $K$, whereas for given $\delta$, $K$, and $\epsilon$, it imposes an upper bound on the dimension $d$.
As $d \to \infty$, the ratio $\Gamma\big(\frac{d}{2} + \frac{3}{2}\big)/\Gamma\big(\frac{d}{2}\big)$ grows in the order of $d^{3/2}$, see \cite[pp.~67--68]{Tem:1996b}.
\end{remark}

In order to prove this theorem, we introduce an exponential tail estimate for the Laplace approximation, which is a modified version of \cite[Prop.~4]{SSW:2020}.
\begin{lemma}
	\label{exp_tail_est}
	Let $\nu = \Normal(\xmap, \delta^{-1}\epsilon\Sigma)$ with $\delta > 0$.
	Then,
	\begin{equation*}
		\nu(\R^d \setminus U(r)) = 1 - \Xi_d\bigg(\frac{\delta r^2}{\epsilon}\bigg) \le 2\exp\left(-\frac{\delta r^2}{8\varepsilon}\right)
	\end{equation*}
	holds for all $r \ge 2(d\epsilon/\delta)^{1/2}$.
\end{lemma}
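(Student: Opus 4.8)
The asserted equality is not the substance of the claim: combining \cref{tail_prob} with the identity $\Gamma(a,z) = \Gamma(a) - \gamma(a,z)$ and the definition of $\Xi_d$ gives $\nu(\Rd \setminus U(r)) = \Gamma(d/2, \delta r^2/(2\epsilon))/\Gamma(d/2) = 1 - \Xi_d(\delta r^2/\epsilon)$ for every $r \ge 0$, exactly as in the proof of \cref{far_range_est}. The work lies in the exponential upper bound, so I would concentrate there.

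First I would pass to clean coordinates. Writing $a := d/2$ and $s := \delta r^2/(2\epsilon)$, the hypothesis $r \ge 2(d\epsilon/\delta)^{1/2}$ becomes precisely $s \ge 4a$, while the target $2\exp(-\delta r^2/(8\epsilon))$ becomes $2e^{-s/4}$, since $\delta r^2/(8\epsilon) = s/4$. Hence it suffices to prove
\[
	\frac{\Gamma(a,s)}{\Gamma(a)} \le 2 e^{-s/4} \quad \text{whenever } s \ge 4a.
\]

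The core estimate is a tilting (Chernoff-type) bound on the upper incomplete gamma function. In $\Gamma(a,s) = \int_s^\infty t^{a-1}e^{-t}\,\di t$ I would split $e^{-t} = e^{-t/2}e^{-t/2}$ and use $e^{-t/2}\le e^{-s/2}$ on the range $t \ge s$ to pull out the decay factor, then enlarge the remaining integral to all of $[0,\infty)$:
\[
	\Gamma(a,s) \le e^{-s/2}\int_0^\infty t^{a-1}e^{-t/2}\,\di t = 2^a\,\Gamma(a)\,e^{-s/2},
\]
where the last integral is evaluated by the substitution $u = t/2$. This yields the dimension-weighted tail bound $\Gamma(a,s)/\Gamma(a) \le 2^a e^{-s/2}$, valid for all $s \ge 0$.

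It remains to absorb the prefactor $2^a$, which is the only delicate point: on its own this bound has decay rate $1/2$ but a factor that grows with $d$, so I must trade part of the decay against the prefactor, and the threshold $s \ge 4a$ is exactly what makes this possible. Writing $2^a e^{-s/2} = 2\,(2^{a-1}e^{-s/4})\,e^{-s/4}$, I would note that $s \ge 4a$ forces $(a-1)\ln 2 \le s/4$ (because $(a-1)\ln 2 \le a \le s/4$, using $\ln 2 < 1$), hence $2^{a-1} \le e^{s/4}$ and the bracketed factor is at most $1$. This gives $\Gamma(a,s)/\Gamma(a) \le 2e^{-s/4}$ and completes the proof. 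The main obstacle is therefore purely the bookkeeping of constants: choosing the tilting parameter $1/2$ and matching it to the hypothesis $s \ge 4a$ so that the blow-up $2^a$ is dominated by the surplus exponential decay.
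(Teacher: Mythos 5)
Your proof is correct, but it takes a genuinely different route from the paper. After the common reduction via \cref{tail_prob} and $\Gamma(a,z)=\Gamma(a)-\gamma(a,z)$, the paper proves the exponential bound probabilistically: it writes $u := \Sigma^{-1/2}(x-\xmap) \sim \Normal(0,\delta^{-1}\epsilon I_d)$ and invokes the Gaussian concentration inequality for $\abs{u}$ around its mean (citing Ledoux--Talagrand), with $\sigma^2 = \delta^{-1}\epsilon$ and $\Exp[\abs{u}] \le (d\epsilon/\delta)^{1/2}$; the hypothesis $r \ge 2(d\epsilon/\delta)^{1/2}$ is used to absorb the mean, via $\Prob(\abs{u}>r) \le \Prob\big(\abs{u} > \tfrac r2 + \Exp[\abs{u}]\big) \le 2\exp(-\delta r^2/(8\epsilon))$. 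You instead work directly on the chi-squared tail: with $a=d/2$, $s=\delta r^2/(2\epsilon)$, your exponential-tilting bound $\Gamma(a,s) \le 2^a\Gamma(a)e^{-s/2}$ (valid for all $s\ge 0$) plus the observation that $s \ge 4a$ lets the surplus decay $e^{-s/4}$ dominate the prefactor $2^{a-1}$ reproduces exactly the same constant $2e^{-s/4} = 2\exp(-\delta r^2/(8\epsilon))$. Your coordinate bookkeeping is right ($r\ge 2(d\epsilon/\delta)^{1/2}$ is precisely $s\ge 4a$, and $\delta r^2/(8\epsilon)=s/4$), and the inequality $(a-1)\ln 2 \le a \le s/4$ holds for all $a>0$. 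What each approach buys: yours is elementary and fully self-contained, replacing the concentration-of-measure citation by a two-line calculation with explicit constants, and it isolates the statement as a pure fact about incomplete gamma functions; the paper's argument is shorter modulo the cited machinery and is more robust in spirit, since Gaussian concentration of the norm does not depend on the exact chi-squared structure and would survive perturbations of the measure where your explicit integral computation would not.
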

\begin{proof}
By \cref{tail_prob}, we have
\[
	\nu(\Rd \setminus U(r_0)) = \frac{1}{\Gamma(d/2)}\Gamma\bigg(\frac{d}{2},\frac{\delta r_0^2}{2\epsilon}\bigg) = 1 - \Xi_d\bigg(\frac{\delta r_0^2}{\epsilon}\bigg).
\]
Let $x \sim \Normal(\xmap, \delta^{-1}\varepsilon \Sigma)$. Then $u := \Sigma^{-1/2}(x - \xmap) \sim \Normal(0,\delta^{-1}\epsilon I_d)$.
The concentration inequality for Gaussian measures yields
\begin{equation*}
	\Prob(\abs{u} > s + \Exp[\abs{u}]) \le \Prob\big(\big|\abs{u} - \Exp[\abs{u}]\big| > s\big) \le 2 \exp \left(-\frac{s^2}{2\sigma^2} \right) \quad \text{for all }s > 0,
\end{equation*}
where $\sigma := \sup_{\abs{z} \le 1} \Exp[\abs{\scalprod{z,u}}^2]$, see \cite[Chapter 3]{LT:2002}.
Now, 
\begin{equation*}
	\sigma^2 = \lambda_\text{max}(\delta^{-1}\varepsilon I_d) = \delta^{-1}\varepsilon,
\end{equation*}
and
\begin{equation*}
	\Exp[\abs{u}] \le \Exp[\abs{u}^2]^\frac12 = \delta^{-\frac12}\varepsilon^\frac12\tr(I_d)^\frac12 = \delta^{-\frac12}\varepsilon^\frac12 d^\frac12.
\end{equation*}
By choosing $s = \delta^{-\frac12}\varepsilon^{1/2}d^{1/2}$ and using that $r \ge 2s$, we obtain
\[
	\mu^y(\R^d \setminus U(r)) = \Prob(\abs{u} > r)
	\le \Prob\left(\abs{u} > \frac{r}{2} + \Exp[\abs{u}]\right)
	\le 2\exp\left(-\frac{\delta r^2}{8\varepsilon}\right).
	\qedhere
\]
\end{proof}

\begin{proof}[Proof of \cref{nonasymp_est_dim}]
We choose
\[
	r_0 := \left(\frac{6\epsilon}{(1 + \epsilon)K}\right)^\frac13.
\]
According to \cref{main_estimate}, we then have 
\[
	\dTV\left(\mu^{y}, \Laplace_{\mu^{y}}\right) \leq E_1(r_0;K,\epsilon,d) + E_2(r_0;\delta,\epsilon,d).
\]
For all $t \ge 0$, the exponential function satisfies the estimate 
\[
	\exp(t) - 1 = \left(1 - \exp(-t)\right)\exp(t) \le t\exp(t).
\]
Therefore, we have
\begin{equation*}
\begin{split}
	E_1(r_0;K,\epsilon,d) &= \frac{2}{\Gamma\left(\frac{d}{2}\right)} (2\epsilon)^{-\frac{d}{2}} \int_0^{r_0}  \left(\exp\left(\frac{(1 + \epsilon)\derbound}{6\epsilon}r^3\right) - 1\right) \exp\left(-\frac{1}{2\epsilon}r^2\right) r^{d-1} \di r \\
	&\le \frac{2}{\Gamma\left(\frac{d}{2}\right)} (2\epsilon)^{-\frac{d}{2} - 1} \frac{(1 + \epsilon)\derbound}{3} \int_0^{r_0} \exp\left(\frac{(1 + \epsilon)\derbound}{6\epsilon} r^3 - \frac{1}{2\epsilon}r^2\right) r^{d+2} \di r.
\end{split}
\end{equation*}
By the choice of $r_0$, we have
\begin{equation*}
	\frac{(1 + \epsilon)\derbound}{6\epsilon} r^3 \leq 1 \quad \text{for all }r \in [0,r_0],
\end{equation*}
so that the integral is bounded by
\begin{equation*}
	e \int_0^{r_0} \exp\left(-\frac{1}{2\epsilon}r^2\right) r^{d+2} \di r.
\end{equation*}
By substituting $s = r^2/2\epsilon$, we can in turn express this integral as
\begin{equation*}
	\frac12(2\epsilon)^{\frac{d}{2} + \frac32} \int_0^{\frac{r_0^2}{2\epsilon}} e^{-s}s^{\frac{d}{2} + \frac12} \di s
	= \frac12(2\epsilon)^{\frac{d}{2} + \frac32} \gamma\left(\frac{d}{2} + \frac32, \frac{r_0^2}{2\epsilon}\right).
\end{equation*}
Now, we use the inequality $\gamma(a,z) \le \Gamma(a)$ to obtain that
\[
	E_1(r_0;K,\epsilon,d) \le \frac{2^\frac12 e}{3} (1 + \epsilon) \epsilon^\frac12 \derbound \frac{\Gamma\big(\frac{d}{2} + \frac32\big)}{\Gamma\big(\frac{d}{2}\big)}.
\]

By condition \eqref{cond_E2_dim}, we have
\[
	r_0 = \left(\frac{6\epsilon}{(1 + \epsilon)K}\right)^\frac13 \ge 2\left(\frac{d\epsilon}{\delta}\right)^\frac12.
\]
Thus, we may apply \cref{exp_tail_est}, which yields
\begin{equation*}
\begin{split}
	E_2(r_0;\delta,\epsilon,d) &= \delta^{-\frac{d}{2}} \left(1 - \Xi\bigg(-\frac{\delta r_0^2}{\epsilon}\bigg)\right)
	\le 2\delta^{-\frac{d}{2}} \exp\left(-\frac{\delta r_0^2}{8\epsilon}\right) \\
	&= 2\delta^{-\frac{d}{2}} \exp\left(-\frac18\left(\frac{6\delta^\frac32}{(1 + \epsilon)\epsilon^\frac12 K}\right)^\frac23\right)
	\le C(1 + \epsilon)\epsilon^\frac12 \derbound \frac{\Gamma\big(\frac{d}{2} + \frac32\big)}{\Gamma\big(\frac{d}{2}\big)}
\end{split}
\end{equation*}
by condition \eqref{cond_E2_dim}.
Now, we obtain by summing up that
\[
	\dTV\left(\mu^{y}, \Laplace_{\mu^{y}}\right) \le 2C (1 + \epsilon)\epsilon^\frac12 \derbound \frac{\Gamma\big(\frac{d}{2} + \frac32\big)}{\Gamma\big(\frac{d}{2}\big)}.
	\qedhere
\]
\end{proof}

\subsection{Asymptotic behavior for fixed and increasing problem dimension}

Now, we describe the convergence of the Laplace approximation for a sequence of nonlinear problems that satisfy \cref{unique_minimizer,bound_third_diff,quadratic_bound_I} with varying bounds $\{K_n\}_{n\in\N}$ and $\{\delta_n\}_{n\in\N}$, respectively, and varying squared noise levels $\{\epsilon_n\}_{n\in\N}$, both in case of a fixed and an increasing problem dimension.
We denote the data by $y_n$, the prior distribution by $R_n$, the scaled negative log-likelihood by $\Phi_n$, and set $I_n(x) = \Phi_n(x) + \epsilon_n R_n(x)$.

First, we consider the case that the problem dimension $d$ remains constant.
\begin{corollary}[Fixed problem dimension]
	\label{conv_rate_fixed_dim}
	Suppose that $I_n$, $\Phi_n$, and $R_n$ satisfy \cref{unique_minimizer,bound_third_diff,quadratic_bound_I}.
	If $\epsilon_n^{1/2}\derbound_n \to 0$ and if there exist $\underline{\delta} > 0$ and $N_0 \in \N$ such that
	\[
		\delta_n \ge \underline{\delta}
		\quad\text{and}\quad
		\epsilon_n \le 1
	\]
	for all $n \ge N_0$, then there exist $C = C(d) > 0$ and $N_1 \ge N_0$ such that
	\[
		\dTV\left(\mu^{y_n}, \Laplace_{\mu^{y_n}}\right) \leq C \epsilon_n^\frac12\derbound_n
	\]
	for all $n \ge N_1$.
\end{corollary}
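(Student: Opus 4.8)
The plan is to apply \cref{nonasymp_est_dim} to each problem in the sequence and to verify that its hypothesis \eqref{cond_E2_dim} is eventually satisfied, so that the explicit bound \eqref{eq:nonasymp_est_dim} applies. Writing $a_n := \epsilon_n^{1/2}\derbound_n$, the assumption is precisely that $a_n \to 0$, together with $\delta_n \ge \underline\delta$ and $\epsilon_n \le 1$ for $n \ge N_0$. Under these bounds $1 + \epsilon_n \le 2$, so the left-hand side of \eqref{cond_E2_dim} obeys
\[
	\frac{6\delta_n^{3/2}}{(1+\epsilon_n)\epsilon_n^{1/2}\derbound_n} \ge \frac{3\underline\delta^{3/2}}{a_n},
\]
which tends to $+\infty$ as $n \to \infty$. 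The whole argument therefore reduces to showing that the right-hand side of \eqref{cond_E2_dim} grows strictly more slowly than $1/a_n$.

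The right-hand side is the maximum of two terms. The first term $8 d^{3/2}$ is a fixed constant, since the dimension is frozen, so $\frac{3\underline\delta^{3/2}}{a_n} \ge 8 d^{3/2}$ as soon as $a_n \le \frac{3\underline\delta^{3/2}}{8 d^{3/2}}$, which holds for all large $n$. For the second term I would first bound the argument of the logarithm from above using $1 + \epsilon_n \ge 1$ and $\delta_n^{d/2} \ge \underline\delta^{d/2}$, giving
\[
	\frac{2}{C(1+\epsilon_n)\epsilon_n^{1/2}\derbound_n \delta_n^{d/2}} \cdot \frac{\Gamma(d/2)}{\Gamma(d/2+3/2)} \le \frac{B}{a_n}, \qquad B := \frac{2}{C\underline\delta^{d/2}} \cdot \frac{\Gamma(d/2)}{\Gamma(d/2+3/2)},
\]
with $B = B(d,\underline\delta) > 0$ a constant and $C = \sqrt2 e/3$ the constant of \cref{nonasymp_est_dim}. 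Since $B/a_n \to \infty$, the logarithm is positive for large $n$, and monotonicity of $\ln$ gives that the second term is at most $\bigl(8\ln(B/a_n)\bigr)^{3/2}$. The comparison $\frac{3\underline\delta^{3/2}}{a_n} \ge \bigl(8\ln(B/a_n)\bigr)^{3/2}$ then holds for all sufficiently large $n$, because $1/a_n$ grows faster than any fixed power of $\ln(1/a_n)$. Combining the two parts, \eqref{cond_E2_dim} is valid for all $n \ge N_1$ for some $N_1 \ge N_0$.

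Once \eqref{cond_E2_dim} holds, \cref{nonasymp_est_dim} yields directly
\[
	\dTV\bigl(\mu^{y_n}, \Laplace_{\mu^{y_n}}\bigr) \le 2C(1+\epsilon_n)\epsilon_n^{1/2}\derbound_n \frac{\Gamma(d/2+3/2)}{\Gamma(d/2)} \le 4C \frac{\Gamma(d/2+3/2)}{\Gamma(d/2)}\, \epsilon_n^{1/2}\derbound_n,
\]
where I used $1+\epsilon_n \le 2$, and the claim follows with $C(d) := 4C\,\Gamma(d/2+3/2)/\Gamma(d/2)$. The only genuinely nontrivial point is the second paragraph: establishing that the logarithmic term on the right of \eqref{cond_E2_dim} is eventually dominated by the $1/a_n$ growth of the left-hand side. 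Everything else is bookkeeping with the uniform bounds $\delta_n \ge \underline\delta$ and $\epsilon_n \le 1$, which keep all $d$-dependent quantities and the factor $1+\epsilon_n$ controlled.
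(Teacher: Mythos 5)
Your proof is correct and follows essentially the same route as the paper's: verify that condition \eqref{cond_E2_dim} holds eventually by bounding its left-hand side below by a constant multiple of $1/(\epsilon_n^{1/2}\derbound_n)$ and its right-hand side above by a constant plus a term of order $\bigl(\ln(1/(\epsilon_n^{1/2}\derbound_n))\bigr)^{3/2}$, then invoke \cref{nonasymp_est_dim}. The only difference is cosmetic: you track the constants $B$ and $C(d)$ explicitly, whereas the paper absorbs them into unnamed constants $C_1, C_2$ and cites $\lim_{t\to\infty} t^{-2/3}\ln t = 0$ for the same logarithm-versus-power comparison.
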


\begin{proof}
Since $\{\delta_n\}_{n\in\N}$ is bounded from below and $\{\epsilon_n\}_{n\in\N}$ is bounded from above, the left hand side of \eqref{cond_E2_dim} is bounded from below by $C_1/\epsilon_n^{1/2} K_n$ for some $C_1 > 0$. On the other hand, the right hand side of \eqref{cond_E2_dim} is bounded from above by $(8 \ln C_2/\epsilon_n^{1/2} K_n)^{2/3}$ for large enough $n$ and some $C_2 > 0$, since $\{\delta_n\}_{n\in\N}$ is bounded from below and $\{\epsilon_n\}_{n\in\N}$ is bounded from below by $0$. Consequently, there exists $N_1 \ge N_0$ such that condition \eqref{cond_E2_dim} holds for all $n \ge N_1$ by the convergence $\epsilon_n^{1/2} K_n \to 0$ and since $\lim_{t \to \infty} t^{-2/3} \ln t = 0$.
Now, \cref{nonasymp_est_dim} yields the proposition.
\end{proof}

\begin{remark}
\Cref{conv_rate_fixed_dim} covers two cases of particular interest: That of $K_n \to 0$ while $\epsilon_n = \epsilon$ remains constant, which yields a rate of $K_n$, and that of $\epsilon_n \to 0$ while $\derbound_n = \derbound$ remains constant, which yields a rate of $\epsilon_n^{1/2}$.
The former case can, for example, occur if the sequence of forward mappings $G_n$ converges pointwise towards a linear mapping, see \cref{sec:small_nonlin_limit}.
The convergence rate in the latter case, i.e., in the small noise limit, agrees with the rate established in \cite[Theorem 2]{SSW:2020} if we set $\epsilon_n = \frac{1}{n}$.
\end{remark}

Now, we consider the case of an increasing problem dimension $d \to \infty$. To this end, we index $K_d$, $\delta_d$, $\epsilon_d$, and $R_d$ by $d \in \N$.
\begin{corollary}[Increasing problem dimension]
	\label{cor_inc_prob_dim}
	Suppose that $I_d$, $\Phi_d$, and $R_d$ satisfy \cref{unique_minimizer,bound_third_diff,quadratic_bound_I} and that $\epsilon_d^{1/2}\derbound_d \to 0$.
	If there exists $N_0 \in \N$ such that $\delta_d \le e^{-1/2}$, $\epsilon_d \le 1$, and
	\begin{equation}
		\label{asymp_cond_dim}
		\frac{3}{\epsilon_d^\frac12\derbound_d} \ge \left(\frac{8}{\delta_d}\ln \frac{1}{\delta_d}\right)^\frac32 d^\frac32
	\end{equation}
	for all $d \ge N_0$,
	then for every $C > 2\sqrt{2}e/3$, there exists $N_1 \ge N_0$ such that
	\[
		\dTV\left(\mu^{y_d}, \Laplace_{\mu^{y_d}}\right) \leq C \epsilon_d^\frac12\derbound_d d^\frac32
	\]
	for all $d \ge N_1$.
\end{corollary}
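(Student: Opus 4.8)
The plan is to apply \cref{nonasymp_est_dim} and then track the asymptotic growth of the resulting bound in $d$, so the main task is twofold: first verify that condition \eqref{cond_E2_dim} is eventually satisfied, and second replace the gamma-function ratio on the right-hand side of \eqref{eq:nonasymp_est_dim} by its known asymptotic order $d^{3/2}$.

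First I would check condition \eqref{cond_E2_dim}. The left-hand side is $6\delta_d^{3/2}/((1+\epsilon_d)\epsilon_d^{1/2}K_d)$, and I must dominate the maximum of the two terms on the right. For the first term $8d^{3/2}$, I would use hypothesis \eqref{asymp_cond_dim}: since $\delta_d \le e^{-1/2} < 1$ we have $\frac{1}{\delta_d}\ln\frac{1}{\delta_d} > 0$, so \eqref{asymp_cond_dim} gives a lower bound on $1/(\epsilon_d^{1/2}K_d)$ that, after using $1+\epsilon_d \le 2$ and absorbing the $\delta_d^{3/2}$ factor, yields $6\delta_d^{3/2}/((1+\epsilon_d)\epsilon_d^{1/2}K_d) \ge 8d^{3/2}$. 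For the second term, the logarithm inside depends on $d$ through the gamma ratio $\Gamma(\tfrac d2)/\Gamma(\tfrac d2 + \tfrac32) \sim d^{-3/2}$ and through the factor $\delta_d^{-d/2}$; I expect this logarithmic term to grow much more slowly than $d^{3/2}$, so the first term dominates for large $d$ and \eqref{asymp_cond_dim} suffices to control both. This is where I would need to argue carefully that the $\delta_d^{d/2}$ in the denominator inside the logarithm does not spoil the estimate, using $\delta_d \le e^{-1/2}$ to bound $\ln(\delta_d^{-d/2}) = \tfrac d2 \ln(1/\delta_d)$, which is again handled by the $(\tfrac1{\delta_d}\ln\tfrac1{\delta_d})^{3/2}d^{3/2}$ margin in \eqref{asymp_cond_dim}.

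Having established \eqref{cond_E2_dim} for all $d \ge N_1$, \cref{nonasymp_est_dim} gives
\[
	\dTV\left(\mu^{y_d}, \Laplace_{\mu^{y_d}}\right) \le 2C_0(1+\epsilon_d)\epsilon_d^{1/2}K_d \frac{\Gamma\big(\tfrac d2 + \tfrac32\big)}{\Gamma\big(\tfrac d2\big)}
\]
with $C_0 = \sqrt2 e/3$. I would then invoke the cited asymptotics \cite[pp.~67--68]{Tem:1996b} that $\Gamma(\tfrac d2+\tfrac32)/\Gamma(\tfrac d2) \sim (\tfrac d2)^{3/2} = 2^{-3/2}d^{3/2}$ as $d \to \infty$. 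Combined with $1+\epsilon_d \to$ (bounded by $2$, but in fact using $\epsilon_d \to 0$ is not assumed, only $\epsilon_d \le 1$), I would bound $(1+\epsilon_d)\Gamma(\tfrac d2+\tfrac32)/\Gamma(\tfrac d2) \le (1+o(1))\cdot 2^{-3/2}d^{3/2}(1+\epsilon_d)$. The constant prefactor becomes $2C_0 \cdot 2^{-3/2} = 2(\sqrt2 e/3)2^{-3/2} = 2e/3 \cdot 2^{1/2-3/2}\cdot\sqrt2$; tidying the arithmetic gives exactly the threshold $2\sqrt2 e/3$ stated. For any $C > 2\sqrt2 e/3$ the $(1+o(1))$ and $(1+\epsilon_d)$ factors can be absorbed for $d$ large enough, which furnishes $N_1$ and completes the estimate $\dTV \le C\epsilon_d^{1/2}K_d d^{3/2}$.

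The main obstacle I anticipate is verifying condition \eqref{cond_E2_dim}, specifically controlling the logarithmic second term whose argument contains both the gamma ratio and the potentially small quantity $\delta_d^{d/2}$. The hypothesis \eqref{asymp_cond_dim} is evidently engineered so that the dominant $d^{3/2}$ growth on the right of \eqref{cond_E2_dim} is absorbed, but I would need to confirm that the logarithm, after expanding $\ln(\delta_d^{-d/2})$ and $\ln(\Gamma(\tfrac d2)/\Gamma(\tfrac d2+\tfrac32))$, stays of lower order than the $(\tfrac1{\delta_d}\ln\tfrac1{\delta_d})^{3/2}d^{3/2}$ margin uniformly in $d$; the remaining step, converting the gamma ratio to $d^{3/2}$ and matching constants, is then routine asymptotics.
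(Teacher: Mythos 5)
Your route is the same as the paper's: show that \eqref{asymp_cond_dim} together with the side conditions implies \eqref{cond_E2_dim} for large $d$, apply \cref{nonasymp_est_dim}, and convert the gamma ratio into $d^{3/2}$ via the asymptotics $\Gamma(\tfrac d2+\tfrac32)/\Gamma(\tfrac d2)\sim(\tfrac d2)^{3/2}$. The final step is fine; your constant bookkeeping actually produces the asymptotic prefactor $2e/3$ rather than $2\sqrt2e/3$, but since $2e/3<2\sqrt2e/3$ this slip is harmless for the stated conclusion. The problem is that the step you explicitly defer is not a routine verification --- it is the entire content of the proof, and your heuristic for it is false in general. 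Nothing in the hypotheses bounds $\delta_d$ from \emph{below} (only $\delta_d\le e^{-1/2}$ is assumed), so the factor $1/\delta_d$ that appears once you normalize \eqref{cond_E2_dim} is unbounded; and $\ln\big(1/(\epsilon_d^{1/2}\derbound_d)\big)$ can grow faster than any power of $d$, since $\epsilon_d^{1/2}\derbound_d\to0$ at an arbitrary rate. Hence the claim that the logarithmic branch of the max ``grows much more slowly than $d^{3/2}$'' cannot be argued by orders of growth alone. What the paper does is raise \eqref{cond_E2_dim} to the power $2/3$ and divide by $\delta_d$, arriving at \eqref{cond1_split_up_RHS}: the right-hand side splits into $\frac{4d}{\delta_d}\ln\frac1{\delta_d}$ (coming from $\ln\delta_d^{-d/2}$; this part is indeed at most half of the left-hand side by \eqref{asymp_cond_dim}, as you anticipated) plus the term
\[
	\frac{8}{\delta_d}\,\ln\!\left(\frac{2}{C(1+\epsilon_d)\epsilon_d^{1/2}\derbound_d}\cdot\frac{\Gamma\big(\tfrac d2\big)}{\Gamma\big(\tfrac d2+\tfrac32\big)}\right),
\]
an unbounded prefactor times an unbounded logarithm. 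Dominating this by half of $\big(6/((1+\epsilon_d)\epsilon_d^{1/2}\derbound_d)\big)^{2/3}$ is precisely where the work lies: one must play the coupling between $\delta_d$ and $\epsilon_d^{1/2}\derbound_d$ encoded in \eqref{asymp_cond_dim} against $\lim_{t\to\infty}t^{-2/3}\ln t=0$. This step is genuinely delicate --- note that the paper itself handles the prefactor by replacing $8/\delta_d$ with $8e^{1/2}$, a replacement that is only valid when $\delta_d\ge e^{-1/2}$, the reverse of the stated hypothesis, so even the published argument requires care at exactly this point. A proposal that leaves this as ``to be confirmed'' has not proved the corollary.

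A secondary flaw: your justification of the branch $8d^{3/2}$ of \eqref{cond_E2_dim} by the \emph{positivity} of $\frac1{\delta_d}\ln\frac1{\delta_d}$ is insufficient. Using $1+\epsilon_d\le2$ and cancelling $\delta_d^{3/2}$ against $\delta_d^{-3/2}$, hypothesis \eqref{asymp_cond_dim} gives
\[
	\frac{6\delta_d^{3/2}}{(1+\epsilon_d)\epsilon_d^{1/2}\derbound_d}
	\ge \Big(8\ln\tfrac1{\delta_d}\Big)^{3/2} d^{3/2},
\]
and this dominates $8d^{3/2}$ if and only if $\ln\frac1{\delta_d}\ge\frac12$. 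That is the actual role of the hypothesis $\delta_d\le e^{-1/2}$: it is quantitatively calibrated so that $\big(8\ln\frac1{\delta_d}\big)^{3/2}\ge8$, and for $\delta_d$ close to $1$ the inequality you assert would simply fail. So both places where your plan leans on soft reasoning (positivity, order-of-growth) need the sharp, coupled use of \eqref{asymp_cond_dim} and $\delta_d\le e^{-1/2}$ that constitutes the paper's proof.
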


\begin{proof}
We can write condition \eqref{cond_E2_dim} as
\begin{align}
	\label{cond2_transformed}
	\left(\frac{6}{(1 + \epsilon_d)\epsilon_d^\frac12\derbound_d}\right)^\frac23
	&\ge \frac{4d}{\delta_d} \quad\text{and} \\
	\label{cond1_split_up_RHS}
	\left(\frac{6}{(1 + \epsilon_d)\epsilon_d^\frac12\derbound_d}\right)^\frac23
	&\ge \frac{8}{\delta_d} \ln \left(\frac{2}{C(1 + \epsilon_d)\epsilon_d^\frac12\derbound_d}\cdot\frac{\Gamma\big(\frac{d}{2}\big)}{\Gamma\big(\frac{d}{2} + \frac32\big)}\right) + \frac{4d}{\delta_d} \ln \frac{1}{\delta_d}.
\end{align}
By \cite[pp.~67--68]{Tem:1996b}, we have
\begin{equation}
	\label{asymp_Gamma_ratio}
	\lim_{d \to \infty} \frac{\Gamma\big(\frac{d}{2} + \frac32\big)}{\Gamma\big(\frac{d}{2}\big)}\left(\frac{d}{2}\right)^{-\frac32} = 1,
\end{equation}
so that the first term on the right hand side of \eqref{cond1_split_up_RHS} is bounded from above by
\[
	8e^\frac12 \ln \frac{C_1}{\epsilon_d^\frac12\derbound_d d^\frac32} \le 8e^\frac12 \ln \frac{C_1}{\epsilon_d^\frac12\derbound_d}
\]
for some $C_1 > 0$, which in turn is bounded from above by
\begin{equation}
	\label{cond1_LHS_lower_bound}
	\frac12\left(\frac{3}{\epsilon_d^{1/2}\derbound_d}\right)^\frac23 
	\le \frac12\left(\frac{6}{(1 + \epsilon_d)\epsilon_d^{1/2}\derbound_d}\right)^\frac23
\end{equation}
for large enough $d$, due to the convergence $\epsilon_d^{1/2}\derbound_d \to 0$, the boundedness from above of $\{\epsilon_n\}_{n\in\N}$, and since $\lim_{t \to \infty} t^{-2/3} \ln t = 0$.
By \eqref{asymp_cond_dim}, the second term on the right hand side of \eqref{cond1_split_up_RHS} is bounded from above by \eqref{cond1_LHS_lower_bound} for all $d \ge N_0$ as well.
Due to the assumption $\delta_d \le e^{-1/2}$ and \eqref{cond1_LHS_lower_bound}, condition \eqref{asymp_cond_dim} also ensures that \eqref{cond2_transformed} is satisfied for all $d \ge N_0$.
Therefore, condition \eqref{cond_E2_dim} is satisfied for large enough $d$.
Now, \cref{nonasymp_est_dim} and \eqref{asymp_Gamma_ratio} yield that for every $C > 2\sqrt{2}e/3$ there exists $N_1 \ge N_0$ such that
\[
	\dTV\left(\mu^{y_d}, \Laplace_{\mu^{y_d}}\right) \leq C \epsilon_d^\frac12\derbound_d d^\frac32
\]
for all $d \ge N_1$.
\end{proof}


\section{Perturbed linear problems with Gaussian prior}
\label{sec:small_nonlin_limit}

In this section we consider the case that the forward mapping $G$ is given by a linear mapping with a small nonlinear perturbation, i.e., that
\begin{equation}
	\label{lin_op_nonlin_perturb}
	G_\tau(x) = Ax + \tau F(x),
\end{equation}
with $A \in \R^{d \times d}$, $F \in C^3(\R^d)$, and $\tau\geq 0$.
We then quantify the error of the Laplace approximation for small $\tau$, that is, when the nonlinearity of the forward mapping is small, and for fixed problem dimension $d$.
In order to isolate the effect of the nonlinearity on estimate \eqref{eq:nonasymp_est_dim}, we consider the case when not only the noise, but also the prior distribution is Gaussian. This ensures that all non-Gaussianity in the posterior distribution results from the nonlinearity of $G_\tau$. 

We assign a prior distribution $\mu = \Normal(m_0,\Sigma_0)$ with $m_0 \in \R^d$ and symmetric, positive definite $\Sigma_0 \in \R^{d \times d}$, i.e., we set
\begin{align}
	R(x) &:= -\ln\left(\frac{1}{(2\pi)^{d/2}\sqrt{\det \Sigma_0}} \exp\left(-\frac{1}{2}\norm{x - m_0}_{\Sigma_0}^2\right)\right) \nonumber \\
	&= \frac{1}{2} \norm{x - m_0}_{\Sigma_0}^2 + \frac{d}{2}\ln 2\pi + \frac12\ln \det \Sigma_0. \label{Gaussian_R}
\end{align}
For each $\tau \ge 0$ we denote the data by $y_\tau$ and the scaled negative log-likelihood by $\Phi_\tau(x) = \frac12\eunorm{G_\tau(x) - y_\tau}^2$.

We make the following assumptions on the function $I_\tau$ and the perturbation $F$. Let $B(r) \subset \R^d$ denote the closed Euclidean ball with radius $r$ around the origin.
\begin{assumption}
	\label{ex_min_small_tau}
	We assume that there exists $\tau_0 > 0$ such that for all $\tau \in [0,\tau_0]$,
	\[
		I_\tau(x) = \frac12\eunorm{Ax + \tau F(x) - y_\tau}^2 + \frac{\epsilon}{2}\norm{x - m_0}_{\Sigma_0}^2 + c_\tau
	\]
	has a unique minimizer $\xmap_\tau$ with $D^2I_\tau(\xmap_\tau) > 0$.
	Furthermore, we assume that $y_\tau$, $\xmap_\tau$ and $\Sigma_\tau := D^2I_\tau(\xmap_\tau)^{-1}$ converge as $\tau \to 0$ with $\lim_{\tau \to 0} \Sigma_\tau > 0$ and denote their limits by $y$, $\xmap$, and $\Sigma$, respectively.
\end{assumption}

\begin{assumption}
	\label{assumptions_F}
	There exist constants $C_0, \dots, C_3>0$ and $\tau_0>0$ such that
	\begin{equation*}
		\norm{D^jF(x)}_{\Sigma_\tau} \le C_j, \qquad j=0,\dots,3,
	\end{equation*}
	for all $x \in \Rd$ and $\tau \in [0,\tau_0]$, and there exists $M > 0$ such that
	\begin{equation*}
		D^3 F \equiv 0 \qquad \text{on }\R^d \setminus B(M).
	\end{equation*}
\end{assumption}

The idea behind the following theorem is to make explicit how the nonlinearity of the forward mapping, as quantified by $\tau$ and the constants $C_0, \dots, C_3, M$ in \cref{assumptions_F}, influences the total variation error bound of \cref{nonasymp_est_dim}.
\begin{theorem}
	\label{conv_rate_nonlin_perturb}
	Suppose that \cref{ex_min_small_tau,assumptions_F} hold. Then, there exists $\tau_1 \in (0,\tau_0]$ such that
	\[
		\dTV\left(\mu^{y_\tau},\Laplace_{\mu^{y_\tau}}\right) \le C(1 + \epsilon)\epsilon^\frac12 \left(V(\tau)\tau + \frac{W}{2}\tau^2\right)
	\]
	for all $\tau \in [0,\tau_1]$, where
	\begin{align*}
		C  &:= \frac23\sqrt{2}e \frac{\Gamma\left(\frac{d}{2} + \frac32\right)}{\Gamma\left(\frac{d}{2}\right)}, \\
		V(\tau) &:= C_3\left(\norm{A}M + \eunorm{y_\tau}\right) + 3C_2\bignorm{A\Sigma_\tau^\frac12}, \\
		W &:= C_3 C_0 + 3 C_2 C_1.
	\end{align*}
	Moreover, $\{V(\tau)\}_{\tau \in [0,\tau_1]}$ is bounded.
\end{theorem}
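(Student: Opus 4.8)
The plan is to reduce everything to the explicit bound of \cref{nonasymp_est_dim} by producing, for each small $\tau$, admissible constants for \cref{bound_third_diff,quadratic_bound_I}, namely a third-differential bound $K_\tau = \bigO(\tau)$ and a quadratic-lower-bound constant $\delta_\tau$ bounded away from zero. Since the prior $\mu = \Normal(m_0,\Sigma_0)$ is Gaussian, $R$ is quadratic by \eqref{Gaussian_R} and $D^3 R \equiv 0$, so the entire third-order contribution to $I_\tau$ comes from $\Phi_\tau(x) = \frac12\eunorm{g_\tau(x)}^2$ with $g_\tau(x) := Ax + \tau F(x) - y_\tau$. Note that \cref{ex_min_small_tau} already supplies the hypotheses of \cref{unique_minimizer} (unique minimizer, positive definite Hessian) for every $\tau \in [0,\tau_0]$.

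First I would compute $D^3\Phi_\tau$ and bound it in $\Sigma_\tau$-norm. Writing $\Phi_\tau = \frac12\scalprod{g_\tau,g_\tau}$ and differentiating three times yields one term $\scalprod{g_\tau, D^3 g_\tau(h_1,h_2,h_3)}$ and three symmetric cross terms $\scalprod{Dg_\tau(h_i), D^2 g_\tau(h_j,h_k)}$. Because $A$ is linear, $D^2 g_\tau = \tau D^2 F$ and $D^3 g_\tau = \tau D^3 F$, so every surviving term carries at least one factor of $\tau$. Using the uniform bounds $\norm{D^j F}_{\Sigma_\tau}\le C_j$ from \cref{assumptions_F} together with $\eunorm{Dg_\tau(h)}\le \bignorm{A\Sigma_\tau^{1/2}} + \tau C_1$ for $\norm{h}_{\Sigma_\tau}\le1$, the three cross terms contribute $3C_2\bignorm{A\Sigma_\tau^{1/2}}\,\tau + \bigO(\tau^2)$. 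The term $\scalprod{g_\tau, \tau D^3 F}$ is the delicate one: $\eunorm{g_\tau(x)}$ grows like $\eunorm{Ax}$, but the cut-off $D^3 F \equiv 0$ on $\R^d\setminus B(M)$ confines it to $B(M)$, where $\eunorm{g_\tau(x)}\le \norm{A}M + \eunorm{y_\tau} + \tau C_0$, giving a contribution $C_3(\norm{A}M + \eunorm{y_\tau})\tau + \bigO(\tau^2)$. Collecting terms establishes \cref{bound_third_diff} uniformly in $x$ with $K_\tau = V(\tau)\tau + \frac{W}{2}\tau^2$ (the content of \cref{Gt_K}), for $V,W$ as stated.

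Next I would verify the quadratic lower bound \cref{quadratic_bound_I}, i.e.\ produce $\delta_\tau$ (the content of \cref{Gt_delta}). At $\tau=0$ the problem is linear--Gaussian, so $I_0(x)=\frac12\norm{x-\xmap}_\Sigma^2$ exactly and $\delta_0 = 1$; for $\tau>0$ one shows that the nonlinear perturbation alters the global shape of $I_\tau$ only by $\bigO(\tau)$, so that $I_\tau(x)\ge \frac{\delta_\tau}{2}\norm{x-\xmap_\tau}_{\Sigma_\tau}^2$ holds with $\delta_\tau$ bounded below by a positive constant (in fact $\delta_\tau\to1$ as $\tau\to0$). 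Here $\Phi_\tau\ge0$ and the strongly convex prior term $\frac{\epsilon}{2}\norm{x-m_0}_{\Sigma_0}^2$ force at least quadratic growth of $I_\tau$ at infinity, while near $\xmap_\tau$ the bound follows from the third-differential estimate above; the point is to make $\delta_\tau$ explicit and bounded below on some interval $[0,\tau_1]$ with $\tau_1 \in (0,\tau_0]$.

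Finally I would check condition \eqref{cond_E2_dim} and invoke \cref{nonasymp_est_dim}. With $d$ and $\epsilon$ fixed, $\delta_\tau$ bounded below, and $\epsilon^{1/2}K_\tau\to0$ as $\tau\to0$, the left-hand side of \eqref{cond_E2_dim} tends to $+\infty$ while its right-hand side grows only like a power of a logarithm; hence, exactly as in the proof of \cref{conv_rate_fixed_dim} and using $\lim_{t\to\infty}t^{-2/3}\ln t = 0$, condition \eqref{cond_E2_dim} holds for all sufficiently small $\tau$, after shrinking $\tau_1$ if necessary. \Cref{nonasymp_est_dim} then gives
\[
	\dTV(\mu^{y_\tau},\Laplace_{\mu^{y_\tau}}) \le 2\cdot\frac{\sqrt{2} e}{3}(1+\epsilon)\epsilon^{\frac12} K_\tau \frac{\Gamma\big(\frac d2 + \frac32\big)}{\Gamma\big(\frac d2\big)} = C(1+\epsilon)\epsilon^{\frac12}\left(V(\tau)\tau + \frac{W}{2}\tau^2\right).
\]
Boundedness of $\{V(\tau)\}_{\tau\in[0,\tau_1]}$ follows from \cref{ex_min_small_tau}: $y_\tau\to y$ and $\Sigma_\tau\to\Sigma$ imply $\eunorm{y_\tau}$ and $\bignorm{A\Sigma_\tau^{1/2}}$ are bounded on $[0,\tau_1]$. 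I expect the main obstacle to be the global lower bound of the third step: the third-derivative computation is mechanical, but showing \cref{quadratic_bound_I} with an explicit $\delta_\tau$ that stays bounded away from zero requires controlling the perturbation's effect on $I_\tau$ far from $\xmap_\tau$, not merely locally.
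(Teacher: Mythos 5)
Your overall architecture is exactly the paper's: verify \cref{bound_third_diff,quadratic_bound_I} with $\tau$-dependent constants $K_\tau$ and $\delta_\tau$, then feed them into \cref{nonasymp_est_dim}. Your computation of $K_\tau$ is essentially the paper's \cref{Gt_K} (the paper expands $\Phi_\tau = \Phi_0 + \tau\Psi_1 + \tau^2\Psi_2$ instead of differentiating $\frac12\eunorm{g_\tau}^2$ directly, but the resulting terms are the same), and your concluding step --- checking \eqref{cond_E2_dim} from $\epsilon^{1/2}K_\tau \to 0$ and $\delta_\tau \ge \underline{\delta}$, invoking \cref{nonasymp_est_dim}, and using the convergence of $y_\tau$ and $\Sigma_\tau$ for the boundedness of $V$ --- is precisely the paper's proof. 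One bookkeeping caveat: the computation you describe actually produces $3C_2C_1\tau^2$ from the cross terms and $C_3C_0\tau^2$ from the $D^3F$ term, i.e.\ $W\tau^2$ rather than the claimed $\frac{W}{2}\tau^2$, so your "collecting terms" does not by itself reproduce the stated constant.

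The genuine gap is the step you yourself flag as the main obstacle: \cref{quadratic_bound_I}, and the route you sketch for it would not work as stated. First, the perturbation does not alter $I_\tau$ "globally by $\bigO(\tau)$": $\Phi_\tau - \Phi_0$ contains $\tau\scalprod{Ax - y_\tau, F(x)}$, which grows linearly in $\eunorm{x}$; moreover $\xmap_\tau$, $\Sigma_\tau$, $y_\tau$, and the normalizing constant $c_\tau$ all move with $\tau$, and even a uniformly $\bigO(\tau)$ \emph{additive} error cannot be converted into a \emph{multiplicative} factor $\delta_\tau$, because $I_\tau$ vanishes to second order at $\xmap_\tau$. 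Second, your near/far splitting leaves the intermediate region unaccounted for, and the "quadratic growth at infinity" coming from the prior must also absorb the subtracted constant $\Phi_\tau(\xmap_\tau) + \epsilon R(\xmap_\tau)$, i.e.\ the misfit at the MAP point, which need not be small. The paper avoids all of this with a single global estimate (\cref{It_est} plus \cref{Gt_delta}): using $DI_\tau(\xmap_\tau) = 0$, write $I_\tau$ as the sum of the Bregman remainders of $\Phi_\tau$ and $\epsilon R$ at $\xmap_\tau$; Gaussianity makes the prior remainder exactly $\frac12\norm{x - \xmap_\tau}_{\Sigma_0}^2$; for the likelihood, Cauchy--Schwarz isolates the only dangerous term as the second-order Taylor remainder of $G_\tau$ (bounded via $C_2$ by $\frac{\tau C_2}{2}\norm{x - \xmap_\tau}_{\Sigma_\tau}^2$) multiplied by the residual $\eunorm{G_\tau(\xmap_\tau) - y_\tau}$, while the remaining terms dominate the quadratic form $A^\T A + \epsilon\Sigma_0^{-1}$ up to a $\tau^2 C_1^2$ correction. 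This yields the explicit $\delta_\tau = \gamma_1(\tau) - C_2\tau\,\eunorm{G_\tau(\xmap_\tau) - y_\tau}$, with $\delta_\tau$ converging to $\norm{\Sigma^{-1/2}(A^\T A + \epsilon\Sigma_0^{-1})^{-1/2}}^{-2} > 0$ by \cref{ex_min_small_tau}. That global algebraic argument --- not a local-plus-tail gluing --- is the missing idea, and without it (or an equivalent) your proof is incomplete at its load-bearing step.
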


\begin{remark}
\begin{enumerate}
	\item The choice of the upper bound $\tau_1$ is made explicit in the proof of \cref{conv_rate_nonlin_perturb} and depends on $d$ and $\epsilon$, i.a., through $\delta_0$ as defined in \cref{Gt_delta}.

	The proof of \cref{conv_rate_nonlin_perturb} can be adapted to yield a result analogous to \cref{cor_inc_prob_dim} in the case when the problem dimension $d$ tends to $\infty$ while the size $\tau_d$ of the perturbation tends to $0$. Then, $\delta_{\tau_d}$ may converge to $0$, and \eqref{asymp_cond_dim} imposes a bound on the rate at which $\{\tau_d\}_{d\in\N}$ tends to $0$.

\item By the boundedness and continuity of $F$, $G_\tau$ $\Gamma$-converges toward $A$.
	By the fundamental theorem of $\Gamma$-convergence and \cref{ex_min_small_tau}, this, in turn, implies that $\xmap$ is the minimizer of
	\[
		I(x) = \frac12\eunorm{Ax - y}^2 + \frac{\epsilon}{2}\norm{x - m_0}_{\Sigma_0}^2 + c
	\]
	and that $\Sigma = D^2I(\xmap)^{-1}$.

	\item \Cref{conv_rate_nonlin_perturb} remains valid if the assumption that $D^3F \equiv 0$ outside of a bounded set is replaced by
	\[
		\norm{D^3F(x)}_{\Sigma_\tau} \le C_3 M \eunorm{x}^{-1}
	\]
	for all $x \in \R \setminus \{0\}$ and $\tau \in [0,\tau_0]$.
\end{enumerate}
\end{remark}

In order to prove \cref{conv_rate_nonlin_perturb}, we first show that \cref{bound_third_diff,quadratic_bound_I} are satisfied for small enough $\tau$ and determine the bounds $\derbound_\tau$ and $\delta_\tau$.
Then, we derive the error estimate for the perturbed linear case.

\subsection{Verifying Assumption \ref{quadratic_bound_I}}

We verify that \cref{quadratic_bound_I} holds for small enough $\tau$ and determine $\delta_\tau$.
First, we estimate $I_\tau$ from below.
\begin{lemma}
	\label{It_est}
	For all $\tau \ge 0$ and $x \in \Rd$, we have
	\begin{align*}
		I_\tau(x) &\ge \frac12\eunorm{G_\tau(x) - G_\tau(\xmap_\tau)}^2 + \frac{\epsilon}{2}\norm{x - \xmap_\tau}_{\Sigma_0}^2 \\
		&\quad - \eunorm{G_\tau(x) - G_\tau(\xmap_\tau) - DG_\tau(\xmap_\tau)(x -\xmap_\tau)}\cdot\eunorm{G_\tau(\xmap_\tau) - y_\tau}.
	\end{align*}
\end{lemma}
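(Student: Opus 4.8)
The plan is to expand $I_\tau$ around the MAP estimate $\xmap_\tau$ and to exploit the first-order optimality condition $DI_\tau(\xmap_\tau) = 0$. Writing $G_\tau(x) - y_\tau = (G_\tau(x) - G_\tau(\xmap_\tau)) + (G_\tau(\xmap_\tau) - y_\tau)$ and $x - m_0 = (x - \xmap_\tau) + (\xmap_\tau - m_0)$, I would expand both squared terms in the definition
\[
	I_\tau(x) = \tfrac12\eunorm{G_\tau(x) - y_\tau}^2 + \tfrac{\epsilon}{2}\norm{x - m_0}_{\Sigma_0}^2 + c_\tau.
\]
This produces, besides the two desired quadratic terms $\tfrac12\eunorm{G_\tau(x) - G_\tau(\xmap_\tau)}^2$ and $\tfrac{\epsilon}{2}\norm{x - \xmap_\tau}_{\Sigma_0}^2$, two cross terms $\langle G_\tau(x) - G_\tau(\xmap_\tau),\, G_\tau(\xmap_\tau) - y_\tau\rangle$ and $\epsilon\langle x - \xmap_\tau,\, \xmap_\tau - m_0\rangle_{\Sigma_0}$, together with a collection of $x$-independent constants that assemble precisely to $I_\tau(\xmap_\tau) = 0$.

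The heart of the argument is to show that the two cross terms cancel once the nonlinearity is stripped off. First I would record the stationarity condition: differentiating $I_\tau$ and evaluating at $\xmap_\tau$ gives $DG_\tau(\xmap_\tau)^\T(G_\tau(\xmap_\tau) - y_\tau) + \epsilon\Sigma_0^{-1}(\xmap_\tau - m_0) = 0$. Next I would split $G_\tau(x) - G_\tau(\xmap_\tau)$ into its linearization $DG_\tau(\xmap_\tau)(x - \xmap_\tau)$ plus the remainder $\rho(x) := G_\tau(x) - G_\tau(\xmap_\tau) - DG_\tau(\xmap_\tau)(x - \xmap_\tau)$. The linear part of the first cross term becomes $\langle x - \xmap_\tau,\, DG_\tau(\xmap_\tau)^\T(G_\tau(\xmap_\tau) - y_\tau)\rangle$, which by stationarity equals $-\epsilon\langle x - \xmap_\tau,\, \Sigma_0^{-1}(\xmap_\tau - m_0)\rangle$ and thus exactly cancels the second cross term. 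What survives of the cross terms is only $\langle \rho(x),\, G_\tau(\xmap_\tau) - y_\tau\rangle$, yielding the identity
\[
	I_\tau(x) = \tfrac12\eunorm{G_\tau(x) - G_\tau(\xmap_\tau)}^2 + \tfrac{\epsilon}{2}\norm{x - \xmap_\tau}_{\Sigma_0}^2 + \langle \rho(x),\, G_\tau(\xmap_\tau) - y_\tau\rangle.
\]

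Finally, I would bound the residual inner product by Cauchy--Schwarz, $\langle \rho(x),\, G_\tau(\xmap_\tau) - y_\tau\rangle \ge -\eunorm{\rho(x)}\,\eunorm{G_\tau(\xmap_\tau) - y_\tau}$, and observe that $\eunorm{\rho(x)}$ is exactly the factor $\eunorm{G_\tau(x) - G_\tau(\xmap_\tau) - DG_\tau(\xmap_\tau)(x - \xmap_\tau)}$ appearing in the claim. This delivers the stated lower bound for all $\tau \ge 0$ and $x \in \Rd$. The only genuinely delicate point is the bookkeeping of the two cross terms and verifying their cancellation through the optimality condition; everything else is an exact expansion of two quadratics together with a single application of Cauchy--Schwarz, so I expect no real obstacle beyond keeping the inner-product identities straight.
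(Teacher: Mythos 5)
Your proof is correct and is essentially the paper's argument in different bookkeeping: the paper writes $I_\tau(x)$ as the sum of the first-order Taylor remainders of $\Phi_\tau$ and $\epsilon R$ at $\xmap_\tau$ (adding the derivative terms for free via $DI_\tau(\xmap_\tau)=0$) and then applies Cauchy--Schwarz to $\scalprod{G_\tau(x) - G_\tau(\xmap_\tau) - DG_\tau(\xmap_\tau)(x-\xmap_\tau),\, G_\tau(\xmap_\tau)-y_\tau}$, which is exactly your cross-term cancellation and final estimate. No gaps; both routes rest on the same expansion, the same use of the stationarity condition, and the same single application of Cauchy--Schwarz.
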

\begin{proof}
Since $\xmap_\tau$ satisfies the necessary optimality condition 
\[
	D\Phi_\tau(\xmap_\tau) + \epsilon DR(\xmap_\tau) = DI_\tau(\xmap_\tau) = 0, 
\]
we can write $I_\tau(x)$ as
\begin{equation*}
	I_\tau(x) = \Phi_\tau(x) - \Phi_\tau(\xmap_\tau) - D\Phi_\tau(\xmap_\tau)(x - \xmap_\tau) + \epsilon\Big(R(x) - R(\xmap_\tau) - DR(\xmap_\tau)(x - \xmap_\tau)\Big)
\end{equation*}
for all $x \in \R^d$.
For the log-likelihood, we have
\begin{equation*}
	\Phi_\tau(x) - \Phi_\tau(\xmap_\tau) = \frac12 |G_\tau(x) - G_\tau(\xmap_\tau)|^2 + \scalprod{G_\tau(x) - G_\tau(\xmap_\tau), G_\tau(\xmap_\tau) - y_\tau},
\end{equation*}
and
\[
	D\Phi_\tau(\xmap_\tau)(x - \xmap_\tau) = \scalprod{DG_\tau(\xmap_\tau)(x - \xmap_\tau), G_\tau(\xmap_\tau) - y_\tau}
\]
for all $x \in \Rd$.
From this, we obtain
\begin{equation}
\label{skewed_Phi_t}
\begin{split}
	& \Phi_\tau(x) - \Phi_\tau(\xmap_\tau) - D\Phi_\tau(\xmap_\tau)(x - \xmap_\tau) \\
	& \ge \frac12\eunorm{G_\tau(x) - G_\tau(\xmap_\tau)}^2 - \eunorm{G_\tau(x) - G_t(\xmap_\tau) - DG_\tau(\xmap_t)(x -\xmap_\tau)}\cdot\eunorm{G_\tau(\xmap_\tau) - y_\tau} 
\end{split}
\end{equation}
using the Cauchy--Schwarz inequality.
For the log-prior density, we have
\begin{equation*}
	DR(\xmap_\tau)(x - \xmap_\tau) = \scalprod{\xmap_\tau - m_0,x - m_0}_{\Sigma_0} - \norm{\xmap_\tau - m_0}_{\Sigma_0}^2
\end{equation*}
for all $x \in \Rd$, and thus
\begin{equation}
	R(x) - R(\xmap_\tau) - DR(\xmap_\tau)(x - \xmap_\tau) = \frac12\norm{x - \xmap_\tau}_{\Sigma_0}^2. \label{skewed_R}
\end{equation}
Now, adding up \eqref{skewed_Phi_t} and \eqref{skewed_R} multiplied with $\epsilon$ yields the proposition.
\end{proof}

\begin{proposition}
	\label{Gt_delta}
	Suppose that \cref{assumptions_F} holds.
	Then there exists $\tau_0 > 0$, such that $I_\tau$ satisfies 
	\begin{equation*}
		I_\tau(x) \geq \frac{\delta_\tau}{2} \norm{x-\xmap_\tau}_{\Sigma_\tau}^2
\end{equation*}		
	for all $x\in \R^d$ and $\tau \in [0,\tau_0]$, where
	\[
		\delta_\tau := \gamma_1(\tau) - \gamma_2(\tau)\eunorm{G_\tau(\xmap_\tau) - y_\tau} > 0
	\]
	with
	\[
		\gamma_1(\tau) := \frac{1}{\bignorm{\Sigma_\tau^{-\frac12}(A^\T A + \epsilon \Sigma_0^{-1})^{-\frac 12}}^2} -  C_1^2\tau^2
		\quad\text{and}\quad
		\gamma_2(\tau) :=  C_2\tau
	\]
	for all $\tau \in [0,\tau_0]$. Furthermore, $\lim_{\tau \to 0} \delta_\tau > 0$.
\end{proposition}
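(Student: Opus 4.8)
The plan is to feed the lower bound of \cref{It_est} into three separate estimates and match them to $\gamma_1$ and $\gamma_2$. Writing $h := x - \xmap_\tau$, \cref{It_est} gives
\[
	I_\tau(x) \ge \tfrac12\eunorm{G_\tau(x) - G_\tau(\xmap_\tau)}^2 + \tfrac\epsilon2\norm{h}_{\Sigma_0}^2 - \eunorm{G_\tau(x) - G_\tau(\xmap_\tau) - DG_\tau(\xmap_\tau)h}\,\eunorm{G_\tau(\xmap_\tau) - y_\tau}.
\]
I would bound the first two (quadratic) terms below by $\tfrac{\gamma_1(\tau)}2\norm{h}_{\Sigma_\tau}^2$ and the Taylor-remainder factor in the last term above by $\tfrac{\gamma_2(\tau)}2\norm{h}_{\Sigma_\tau}^2$; adding these yields $I_\tau(x) \ge \tfrac12\big(\gamma_1(\tau) - \gamma_2(\tau)\eunorm{G_\tau(\xmap_\tau)-y_\tau}\big)\norm{h}_{\Sigma_\tau}^2$, which is the asserted inequality with $\delta_\tau$ as defined.

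For the remainder term, since $A$ is affine the first-order Taylor remainder of $G_\tau = A\,\cdot + \tau F$ at $\xmap_\tau$ equals $\tau\big(F(x) - F(\xmap_\tau) - DF(\xmap_\tau)h\big)$. Expanding $F$ to second order in mean-value form and invoking the bound $\norm{D^2F}_{\Sigma_\tau} \le C_2$ from \cref{assumptions_F} gives $\eunorm{G_\tau(x) - G_\tau(\xmap_\tau) - DG_\tau(\xmap_\tau)h} \le \tfrac{C_2}{2}\tau\norm{h}_{\Sigma_\tau}^2$, i.e.\ exactly $\tfrac{\gamma_2(\tau)}2\norm{h}_{\Sigma_\tau}^2$ with $\gamma_2(\tau) = C_2\tau$. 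The same reasoning, using $\norm{DF}_{\Sigma_\tau}\le C_1$, furnishes the auxiliary estimate $\eunorm{F(x)-F(\xmap_\tau)} \le C_1\norm{h}_{\Sigma_\tau}$ that I will need for the quadratic part.

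The quadratic part is the crux. Here $G_\tau(x) - G_\tau(\xmap_\tau) = Ah + \tau(F(x)-F(\xmap_\tau))$, so I would compare $\eunorm{Ah + \tau(F(x)-F(\xmap_\tau))}^2 + \epsilon\norm{h}_{\Sigma_0}^2$ against the \emph{unperturbed} Hessian form $h^\T(A^\T A + \epsilon\Sigma_0^{-1})h$. The clean linear part satisfies $\eunorm{Ah}^2 + \epsilon\norm{h}_{\Sigma_0}^2 = h^\T(A^\T A + \epsilon\Sigma_0^{-1})h \ge \norm{\Sigma_\tau^{-1/2}(A^\T A + \epsilon\Sigma_0^{-1})^{-1/2}}^{-2}\norm{h}_{\Sigma_\tau}^2$, since the operator norm of $\Sigma_\tau^{-1/2}(A^\T A+\epsilon\Sigma_0^{-1})^{-1/2}$ is precisely the reciprocal square root of the smallest generalized eigenvalue relating $A^\T A + \epsilon\Sigma_0^{-1}$ to $\Sigma_\tau^{-1}$. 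The nonlinear contribution then has to be absorbed using the auxiliary bound $\eunorm{F(x)-F(\xmap_\tau)}\le C_1\norm{h}_{\Sigma_\tau}$, producing the correction $-C_1^2\tau^2\norm{h}_{\Sigma_\tau}^2$. Controlling the cross term between $Ah$ and $\tau(F(x)-F(\xmap_\tau))$ so that only an order-$\tau^2$ defect survives — rather than an order-$\tau$ one — is the main obstacle, and I expect it to rely on a careful weighted Young's inequality together with the structure of $\Sigma_\tau$; this is where the precise shape of $\gamma_1$ is pinned down.

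Finally, for the positivity claims I would argue by continuity in $\tau$. By \cref{ex_min_small_tau}, $\Sigma_\tau \to \Sigma = (A^\T A + \epsilon\Sigma_0^{-1})^{-1}$ as $\tau\to0$, so $\Sigma_\tau^{-1/2}(A^\T A+\epsilon\Sigma_0^{-1})^{-1/2}\to I$ and its norm tends to $1$; hence $\gamma_1(\tau)\to1$. Moreover $\gamma_2(\tau) = C_2\tau\to0$ and $\eunorm{G_\tau(\xmap_\tau)-y_\tau}$ converges to the finite residual $\eunorm{G(\xmap)-y}$. Consequently $\delta_\tau\to1>0$, and by continuity there is $\tau_0>0$ with $\delta_\tau>0$ for all $\tau\in[0,\tau_0]$, which also gives $\lim_{\tau\to0}\delta_\tau>0$.
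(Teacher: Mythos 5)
Your outline follows the same route as the paper's proof: start from \cref{It_est} with $h := x - \xmap_\tau$, bound the Taylor remainder of $G_\tau$ by $\frac{\tau C_2}{2}\norm{h}_{\Sigma_\tau}^2$ using $D^2G_\tau = \tau D^2F$, bound the linear part through the generalized-eigenvalue estimate $h^\T(A^\T A + \epsilon\Sigma_0^{-1})h \ge \lambda_\tau\norm{h}_{\Sigma_\tau}^2$ with $\lambda_\tau := \bignorm{\Sigma_\tau^{-\frac12}(A^\T A + \epsilon\Sigma_0^{-1})^{-\frac12}}^{-2}$, and conclude via the convergence assumptions of \cref{ex_min_small_tau}. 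Those steps you carry out correctly and they coincide with the paper's. However, the one step you explicitly leave open --- controlling the cross term $2\tau\scalprod{Ah, \Delta F}$, where $\Delta F := F(x) - F(\xmap_\tau)$, so that only an $O(\tau^2)$ defect survives --- is the crux of the proposition, so the proposal as written does not prove the claimed inequality. Moreover, the repair you anticipate cannot deliver the stated constant: a weighted Young's inequality gives, for $s \in (0,1)$,
\begin{equation*}
	\eunorm{Ah + \tau\Delta F}^2 \ge (1-s)\eunorm{Ah}^2 - \left(\frac1s - 1\right)\tau^2\eunorm{\Delta F}^2,
\end{equation*}
so after adding $\epsilon\norm{h}_{\Sigma_0}^2$ and using $\eunorm{\Delta F} \le C_1\norm{h}_{\Sigma_\tau}$ the best available bound is $\left[(1-s)\lambda_\tau - \left(\frac1s - 1\right)C_1^2\tau^2\right]\norm{h}_{\Sigma_\tau}^2$. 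For fixed $s$ this degrades the leading constant multiplicatively, and for $s \sim \tau$ it leaves a defect of order $\tau$; for small $\tau$ no choice of $s$ dominates the advertised $\gamma_1(\tau) = \lambda_\tau - C_1^2\tau^2$, whose correction is of order $\tau^2$.

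For comparison, the paper does not use Young's inequality at all: it discards the cross term in one stroke via $\eunorm{Ah + \tau\Delta F}^2 \ge \eunorm{Ah}^2 - \tau^2\eunorm{\Delta F}^2$, which produces exactly $\gamma_1(\tau) = \lambda_\tau - C_1^2\tau^2$. Be aware that this vector inequality is itself delicate: it is equivalent to $\scalprod{Ah, \tau\Delta F} \ge -\tau^2\eunorm{\Delta F}^2$, which fails for anti-aligned vectors (take $\tau\Delta F = -t\,Ah$ with $0 < t < 1$; then $(1-t)^2 < 1 - t^2$), so your instinct that the cross term is the sensitive point of the whole argument is well founded. A safe variant is obtained from Young's inequality with $s = \tau$, giving $\gamma_1(\tau) = (1-\tau)\left(\lambda_\tau - C_1^2\tau\right)$: this still yields $\delta_\tau > 0$ for all sufficiently small $\tau$ and $\lim_{\tau \to 0}\delta_\tau = \lambda_0 > 0$, so everything used downstream in \cref{conv_rate_nonlin_perturb} survives, only with a correction of order $\tau$ rather than $\tau^2$. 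But as submitted, your proposal leaves the decisive estimate unproven, and the route you sketch for it would not yield the proposition in the stated form.
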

\begin{proof}
Let $x \in \Rd$ be arbitrary.
Then, there exists $z_1 \in \R$ such that $F(x) - F(\xmap_\tau) = DF(z_1)(x - \xmap_\tau)$. 
Therefore,
\[
	\eunorm{F(x) - F(\xmap_\tau)} \le \norm{DF(z_1)}_{\Sigma_\tau} \norm{x - \xmap_\tau}_{\Sigma_\tau} \le C_1 \norm{x - \xmap_\tau}
\]
by \cref{assumptions_F}.
Moreover, we have
\[
	\norm{x - \xmap_\tau}_{\Sigma^\tau} \le \bignorm{\Sigma_\tau^{-\frac 12} \left(A^\T A + \epsilon \Sigma_0^{-1}\right)^{-\frac 12}} \cdot \bigeunorm{\left(A^\T A + \epsilon\Sigma_0^{-1}\right)^\frac12 (x - \xmap_\tau)},
\]
and hence
\begin{align*}
	&\frac 12 \eunorm{G_\tau(x) - G_\tau(\xmap_\tau)}^2 + \frac\epsilon 2 \norm{x-\xmap_\tau}_{\Sigma_0}^2 \\
	& \geq \frac 12\eunorm{A(x-\xmap_\tau)}^2 - \frac{\tau^2}{2} \eunorm{F(x) - F(\xmap_\tau)}^2 + \frac\epsilon 2 \norm{x-\xmap_\tau}_{\Sigma_0}^2 \\
	& \geq \frac 12 \bigscalprod{\left(A^\T A + \epsilon \Sigma_0^{-1}\right) (x-\xmap_\tau), x-\xmap_\tau} - \frac{\tau^2C_1^2}2 \norm{x - \xmap_\tau}_{\Sigma_\tau}^2 \\
	&\geq \frac12\gamma_1(\tau) \norm{x - \xmap_\tau}_{\Sigma_\tau}^2
\end{align*}
for all $\tau \le \tau_0$.
There also exists $z_2 \in \Rd$ such that
\begin{equation*}
	G_\tau(x) - G_\tau(\xmap_\tau) - DG_\tau(\xmap_\tau)(x - \xmap_\tau) = \frac12 D^2G_\tau(z_2)(x - \xmap_\tau, x - \xmap_\tau),
\end{equation*}
and $D^2G_\tau = \tau D^2F$.
By \cref{assumptions_F}, we thus have
\begin{equation*}
\begin{split}
	\eunorm{G_\tau(x) - G_\tau(\xmap_\tau) - DG_\tau(\xmap_\tau)(x - \xmap_\tau)} 
	&\le \frac{\tau}{2}\norm{D^2 F(z_2)}_{\Sigma_\tau} \norm{x - \xmap_\tau}_{\Sigma_\tau}^2 \\
	&\le \frac{\tau C_2}{2} \norm{x - \xmap_\tau}_{\Sigma_\tau}^2
	= \frac12\gamma_2(\tau) \norm{x - \xmap_\tau}_{\Sigma_\tau}^2
\end{split}
\end{equation*}
for all $\tau \leq \tau_0$.
Now, \cref{It_est} yields that
\[
	I_\tau(x) \ge \frac12\gamma_1(\tau) \norm{x - \xmap_\tau}_{\Sigma_\tau}^2 - \frac12\gamma_2(\tau) \norm{x - \xmap_\tau}_{\Sigma_\tau}^2 \eunorm{G_\tau(\xmap_\tau) - y_\tau} = \frac{\delta_\tau}{2} \norm{x - \xmap_\tau}_{\Sigma_\tau}^2.
\]

It remains to show that $\lim_{\tau\to 0} \delta_\tau > 0$. 
The convergence of $y_\tau$ and $\xmap_\tau$ yields
\[
	G_\tau(\xmap_\tau) - y_\tau \to A\xmap - y.
\]
Now, it follows from $\lim_{\tau \to 0} \gamma_2(\tau) = 0$ and the convergence of $\Sigma_\tau$ that
\[
	\lim_{\tau \to 0} \delta_t = \lim_{\tau \to 0} \gamma_1(\tau) = \frac{1}{\bignorm{\Sigma^{-\frac 12} (A^\T A + \epsilon \Sigma_0^{-1})^{-\frac 12}}^2} > 0.
	\qedhere
\]
\end{proof}

\subsection{Verifying Assumption \ref{bound_third_diff}}

Now, we verify that \cref{bound_third_diff} holds for small $\tau$. The following proposition also describes how the nonlinearity of the forward mapping translates into non-Gaussianity of the likelihood, as quantified by the costant $\derbound_d$.
\begin{proposition}
	\label{Gt_K}
	Suppose that \cref{assumptions_F} holds.
	Then $\Phi_\tau$ and $R$ satisfy \cref{bound_third_diff} for all $\tau \in [0,\tau_0]$ with
	\begin{equation*}
		K_\tau := \tau \left(C_3 \left(\norm{A} M + \eunorm{y_\tau}\right) + 3 C_2 \bignorm{A\Sigma_\tau^\frac12}\right) + \frac{\tau^2}{2}\left(C_3 C_0 + 3 C_2 C_1\right).
	\end{equation*}
\end{proposition}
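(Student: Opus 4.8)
The plan is to verify the two bounds of \cref{bound_third_diff} separately, the prior term being immediate. By \eqref{Gaussian_R} the function $R$ is a quadratic polynomial, so $D^3 R \equiv 0$ and $\norm{D^3 R(x)}_{\Sigma_\tau} = 0 \le K_\tau$ for every $x \in \Rd$ and every $\tau \ge 0$. All of the work therefore lies in bounding $\norm{D^3\Phi_\tau(x)}_{\Sigma_\tau}$ uniformly in $x$.

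First I would differentiate $\Phi_\tau(x) = \frac12\eunorm{G_\tau(x) - y_\tau}^2$ three times. Differentiating the inner product repeatedly yields
\begin{align*}
	D^3\Phi_\tau(x)(h_1,h_2,h_3)
	&= \scalprod{D^3 G_\tau(x)(h_1,h_2,h_3), G_\tau(x) - y_\tau}
	+ \scalprod{D^2 G_\tau(x)(h_1,h_2), DG_\tau(x)(h_3)} \\
	&\quad + \scalprod{D^2 G_\tau(x)(h_1,h_3), DG_\tau(x)(h_2)}
	+ \scalprod{DG_\tau(x)(h_1), D^2 G_\tau(x)(h_2,h_3)},
\end{align*}
one summand pairing the third differential of $G_\tau$ against the residual and three symmetric summands pairing a second with a first differential. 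Since $G_\tau(x) = Ax + \tau F(x)$, we have $DG_\tau = A + \tau DF$, $D^2 G_\tau = \tau D^2 F$ and $D^3 G_\tau = \tau D^3 F$, so after substitution every summand carries at least one factor of $\tau$, and the quadratic-in-$\tau$ contributions are precisely those in which two differentials of $F$ meet.

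The estimate then proceeds by applying the Cauchy--Schwarz inequality to each summand and invoking the multilinear operator norms from \cref{assumptions_F}. For the three mixed terms I would write $\eunorm{Ah} = \bigeunorm{A\Sigma_\tau^{1/2}\Sigma_\tau^{-1/2}h} \le \bignorm{A\Sigma_\tau^\frac12}\,\norm{h}_{\Sigma_\tau}$ and use $\eunorm{DF(x)h} \le C_1\norm{h}_{\Sigma_\tau}$ and $\bigeunorm{D^2 F(x)(h,h')} \le C_2\norm{h}_{\Sigma_\tau}\norm{h'}_{\Sigma_\tau}$, so that over the unit $\Sigma_\tau$-ball in each argument these terms produce the linear coefficient $3C_2\bignorm{A\Sigma_\tau^\frac12}$ together with a quadratic-in-$\tau$ remainder governed by the product $C_2 C_1$.

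The main obstacle is the first summand, $\tau\scalprod{D^3 F(x)(h_1,h_2,h_3), G_\tau(x) - y_\tau}$: the residual $G_\tau(x) - y_\tau$ grows like $\norm{A}\eunorm{x}$ as $\eunorm{x}\to\infty$, so a direct Cauchy--Schwarz bound is \emph{not} uniform in $x$. This is exactly where the support hypothesis $D^3 F \equiv 0$ on $\Rd \setminus B(M)$ in \cref{assumptions_F} enters: the summand vanishes unless $\eunorm{x} \le M$, and on $B(M)$ one bounds $\eunorm{G_\tau(x) - y_\tau} \le \norm{A}M + \tau C_0 + \eunorm{y_\tau}$ using $\eunorm{F(x)} \le C_0$, which contributes the linear coefficient $C_3(\norm{A}M + \eunorm{y_\tau})$ and a further $C_3 C_0$ remainder at order $\tau^2$. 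Taking the supremum over $\norm{h_i}_{\Sigma_\tau} \le 1$ and grouping by powers of $\tau$ then assembles these four contributions into the stated value of $K_\tau$. Finally, as noted in the remark after \cref{conv_rate_nonlin_perturb}, the compact-support hypothesis may be traded for the decay estimate $\norm{D^3 F(x)}_{\Sigma_\tau} \le C_3 M \eunorm{x}^{-1}$, which controls the same product $\norm{D^3 F(x)}_{\Sigma_\tau}\,\norm{A}\eunorm{x}$ without requiring $D^3 F$ to vanish identically.
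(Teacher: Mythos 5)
Your route is essentially the paper's. The paper first expands $\Phi_\tau = \Phi_0 + \tau\Psi_1 + \tau^2\Psi_2$ with $\Psi_1(x;\tau) = \scalprod{Ax - y_\tau, F(x)}$ and $\Psi_2(x) = \frac12\eunorm{F(x)}^2$ and then differentiates each piece, whereas you differentiate $\frac12\eunorm{G_\tau(x)-y_\tau}^2$ directly and sort by powers of $\tau$ afterwards; the resulting collection of terms is the same, and you invoke \cref{assumptions_F} exactly as the paper does, including the key observation that the vanishing of $D^3F$ outside $B(M)$ is what makes the residual term $\scalprod{D^3F(x)(h_1,h_2,h_3), G_\tau(x)-y_\tau}$ uniformly bounded in $x$. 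Your handling of the prior term and of the three mixed terms is also correct.

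The one genuine discrepancy is your final sentence, namely that the four contributions ``assemble into the stated value of $K_\tau$.'' They do not: your (correct) accounting of the quadratic-in-$\tau$ terms gives
\[
	\tau^2\left(\bigabs{\scalprod{D^3F(x)(h_1,h_2,h_3),F(x)}} + \bigabs{\scalprod{D^2F(x)(h_1,h_2),DF(x)h_3}} + \cdots\right)
	\le \tau^2\left(C_3 C_0 + 3 C_2 C_1\right),
\]
which is \emph{twice} the stated coefficient $\frac{\tau^2}{2}(C_3C_0+3C_2C_1)$. Interestingly, this mismatch is not your error but the paper's: in the paper's proof, the displayed formula for $D^3\Psi_2$ carries a factor $\frac12$ on all four terms, but the prefactor $\frac12$ in $\Psi_2 = \frac12\eunorm{F}^2$ is already consumed at the first differentiation, since $D\Psi_2(x)h = \scalprod{DF(x)h, F(x)}$, so all four terms of $D^3\Psi_2$ in fact have coefficient $1$. (Check in $d=1$ with $F(x)=x^3$: the third derivative of $\frac12 x^6$ is $60x^3$, while the paper's formula yields $30x^3$.) Consequently the constant your argument produces, $K_\tau = \tau\left(C_3(\norm{A}M + \eunorm{y_\tau}) + 3C_2\bignorm{A\Sigma_\tau^\frac12}\right) + \tau^2\left(C_3C_0 + 3C_2C_1\right)$, is the correct one; neither your argument nor the paper's, once corrected, delivers the stated $\frac12$, and the factor propagates to the term $\frac{W}{2}\tau^2$ in \cref{conv_rate_nonlin_perturb}. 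Since the affected term is quadratic in $\tau$, nothing qualitative changes downstream, but your write-up should either state $K_\tau$ with $\tau^2$ in place of $\frac{\tau^2}{2}$ or explicitly flag that the claimed constant is not attained by this estimate.
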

\begin{proof}
We express the scaled negative log-likelihood for all $x \in \R^d$ as
\begin{align*}
	\Phi_\tau(x) &= \frac12\eunorm{Ax + \tau F(x) - y_\tau}^2 \\
	&= \frac12\eunorm{Ax - y_\tau}^2 + \tau\scalprod{Ax - y_\tau, F(x)} + \frac{\tau^2}{2}\eunorm{F(x)}^2 \\
	&= \Phi_0(x) + \tau \Psi_{1}(x; \tau) + \tau^2\Psi_2(x),
\end{align*}
where $\Psi_{1}(x; \tau) := \scalprod{Ax - y_\tau, F(x)}$ and $\Psi_2(x) := \frac12\eunorm{F(x)}^2$.
For the first term, we have $D^3\Phi_0(x) = 0$ for all $x \in \Rd$ due to the linearity of $A$.
The third differentials of $\Psi_{1},\Psi_2$ can be stated explicitly as
\begin{align*}
	D^3\Psi_{1}(x; \tau)(h_1,h_2,h_3) &= \scalprod{D^3F(x)(h_1,h_2,h_3), Ax - y_\tau} + \scalprod{D^2F(x)(h_1,h_2), Ah_3} \\
	&\quad + \scalprod{D^2F(x)(h_2,h_3), Ah_1} + \scalprod{D^2F(x)(h_1,h_3), Ah_2},
\end{align*}
and
\begin{align*}
	D^3\Psi_2(x)(h_1,h_2,h_3) &= \frac12\scalprod{D^3F(x)(h_1,h_2,h_3), F(x)} + \frac12\scalprod{D^2F(x)(h_1,h_2), DF(x)h_3} \\
	&\quad + \frac12\scalprod{D^2F(x)(h_2,h_3), DF(x)h_1} + \frac12\scalprod{D^2F(x)(h_1,h_3), DF(x)h_2}
\end{align*}
for all $x, h_1, h_2, h_3 \in \Rd$ and $\tau\geq 0$.
Therefore, we have
\begin{equation*}
\begin{split}
	\norm{D^3\Psi_{1}(x; \tau)}_{\Sigma_\tau} &\le \norm{D^3F(x)}_{\Sigma_\tau} \eunorm{Ax - y_\tau} + 3\norm{D^2F(x)}_{\Sigma_\tau} \bignorm{A\Sigma_\tau^\frac12} \\
	&\leq C_3 \left(\norm{A} M + \eunorm{y_\tau}\right) + 3 C_2 \bignorm{A\Sigma_\tau^\frac12}
\end{split}
\end{equation*}
for all $x \in \Rd$ and $\tau \leq \tau_0$ by \cref{assumptions_F}.
Moreover, we obtain
\begin{align*}
	\norm{D^3\Psi_2(x)}_{\Sigma_\tau} &\le \frac12\norm{D^3F(x)}_{\Sigma_\tau} \eunorm{F(x)} + \frac32\norm{D^2F(x)}_{\Sigma_\tau}\norm{DF(x)}_{\Sigma_\tau} \\
	&\le \frac12 C_3 C_0 + \frac32 C_2 C_1
\end{align*}
for all $x \in \Rd$ and $\tau \leq \tau_0$.
Now, it follows that
\begin{align*}
	\norm{D^3\Phi_\tau(x)}_{\Sigma_\tau} &\le \tau \norm{D^3\Psi_1(x; \tau)}_{\Sigma_\tau} + \tau^2\norm{D^3\Psi_2(x)}_{\Sigma_\tau} \\
	&\le \tau \left(C_3 \left(\norm{A} M + \eunorm{y_\tau}\right) + 3 C_2 \bignorm{A\Sigma_\tau^\frac12}\right)
+ \frac{\tau^2}{2}\left(C_3 C_0 + 3 C_2 C_1\right)
\end{align*}
for all $x \in \Rd$ and $\tau \leq \tau_0$. 
\end{proof}

\subsection{Proof of Theorem \ref{conv_rate_nonlin_perturb}}

\begin{proof}[Proof of \cref{conv_rate_nonlin_perturb}]
First of all, we note that \cref{unique_minimizer} holds for all $\tau \le \tau_0$ by definition of $G_\tau$, $R$ and by \cref{ex_min_small_tau}.
Second of all, we note that \cref{quadratic_bound_I} holds for all $\tau \le \tau_0$ by \cref{Gt_delta} with $\delta_\tau$ as defined in \cref{Gt_delta}, and that $\delta_0 = \lim_{\tau \to 0} \delta_\tau > 0$.
This allows us to choose $\tau_1 \le \tau_0$ such that $\delta_\tau \ge \frac12\delta_0 =: \underline{\delta}$ for all $\tau \in [0,\tau_1]$.
Third of all, we note that \cref{bound_third_diff} holds for all $\tau \le \tau_0$ by \cref{Gt_K} with $\derbound_\tau$ as defined in \cref{Gt_K}, and that $\lim_{\tau \to 0} \derbound_\tau = 0$.

Since $\{\delta_\tau\}_{\tau \in [0,\tau_1]}$ is bounded from below, the left hand side of condition \eqref{cond_E2_dim} from \cref{nonasymp_est_dim} is bounded from below by $\kappa_1/K_\tau$ for some $\kappa_1 > 0$, and the right hand side of condition \eqref{cond_E2_dim} is bounded from above by $(8\ln \kappa_2/K_\tau)^{3/2}$ for small enough $\tau$ and some $\kappa_2 > 0$.
Therefore, we can choose $\tau_2 \le \tau_1$ such that condition \eqref{cond_E2_dim} is satisfied for all $\tau \in [0,\tau_2]$.
Now, \cref{nonasymp_est_dim} yields that
\begin{align*}
	\dTV\left(\mu^{y_\tau}, \Laplace_{\mu^{y_\tau}}\right) &\le \frac23\sqrt{2}e(1 + \epsilon)\epsilon^\frac12 \frac{\Gamma\big(\frac{d}{2} + \frac32\big)}{\Gamma\big(\frac{d}{2}\big)} \\
	&\quad\cdot\left[\left(C_3\left(\norm{A}M + \eunorm{y_t}\right) + 3C_2\bignorm{A\Sigma_\tau^\frac12}\right)\tau + \frac12\left(C_3 C_0 + 3 C_2 C_1\right)\tau^2\right]
\end{align*}
for all $\tau \in [0,\tau_2]$.	
By the convergence of $y_\tau$ and $\Sigma_\tau$, we can, moreover, choose $\tau_3 \le \tau_2$ such that both $\{\eunorm{y_\tau}\}_{\tau \in [0,\tau_3]}$ and $\{\norm{A\Sigma_\tau^{1/2}}\}_{\tau \in [0,\tau_3]}$ are bounded.
\end{proof}


\section{Outlook}

In this paper we prove novel error estimates for the Laplace approximation when applied to nonlinear Bayesian inverse problems. Here, the error is measured in TV distance and our estimates aim to quantify effects independent of the noise level.
Our central error estimate in \cref{main_estimate} is of particular use for high-dimensional problems because it can be evaluated without integrating in $\Rd$.
Our estimate in \cref{nonasymp_est_dim} makes the influence of the noise level, the nonlinearity of the forward operator, and the problem dimension explicit.
Our estimate for perturbed linear problems in \cref{conv_rate_nonlin_perturb}, in turn, specifies in more detail how the properties of the nonlinear perturbation affect the approximation error.

We point out that our central estimate diverges with an increasing dimension for a fixed noise level and forward mapping, and therefore such asymptotics does not provide any added value compared to the trivial TV upper bound of $1$. This unsatisfactory observation is natural since the limiting posterior and Laplace approximation (if well-defined) are singular with respect to each other and, consequently, the TV distance is maximized.
Future study is therefore needed to establish similar bounds with distances that metrize the weak convergence such as the $1$-Wasserstein distance. Such effort would be aligned with recent developments in BvM theory that extend to nonparametric Bayesian inference and, in particular, Bayesian inverse problems.


\bibliography{references}

\end{document}